\documentclass[reqno,twoside,12pt]{amsart}

\hoffset - 1.5cm

\usepackage{amsmath}
\usepackage{amssymb}
\usepackage{graphicx}
\usepackage{amstext}
\usepackage{amsopn}
\usepackage{amsfonts}
\usepackage{xcolor}
\usepackage{setspace}
\usepackage{enumerate}
\usepackage[polish,english]{babel}
\usepackage[utf8]{inputenc}
\usepackage{polski}

\usepackage{lmodern}

\newtheorem{Remark}{Remark}[section]

\newtheorem{Lemma}[Remark]{Lemma}
\newtheorem{Proposition}[Remark]{Proposition}
\newtheorem{Theorem}[Remark]{Theorem}

\newcommand{\bC}{\mathbb{C}}

\newcommand{\bH}{\mathbb{H}}

\newcommand{\bR}{\mathbb{R}}

\newcommand{\bV}{\mathbb{V}}

\newcommand{\cC}{\mathcal{C}}

\newcommand{\cI}{\mathcal{I}}

\newcommand{\cN}{\mathcal{N}}

\newcommand{\cT}{\mathcal{T}}

\newcommand{\fg}{\mathfrak{g}}
\newcommand{\fh}{\mathfrak{h}}

\newcommand{\fl}{\mathfrak{l}}

\newcommand{\fp}{\mathfrak{p}}

\newcommand{\sub}{\overline{\mathrm{sub}}}

\newcommand{\Id}{\mathrm{Id}}

\makeatletter
\@namedef{subjclassname@2020}{%
  \textup{2020} Mathematics Subject Classification}
\makeatother

\usepackage{todonotes}
\usepackage{setspace}

\begin{document}

\title[Unbounded continua for Yamabe-type equations]{Geometrically distinct unbounded continua of solutions for Yamabe-type equations}

\author{Piotr Stefaniak}
\address{Faculty of Mathematics and Computer Science \\ Nicolaus Copernicus University in Toru\'n\\
PL-87-100 Toru\'{n} \\ ul. Chopina $12 \slash 18$ \\
Poland}

\email{cstefan@mat.umk.pl (P. Stefaniak)}

\date{\today}

\keywords{Global bifurcation, Yamabe-type equations, sign-changing solutions, cohomogeneity one actions, geometrically distinct solutions}
\subjclass[2020]{Primary 35B32; Secondary 35B06, 35B33, 35J61}

\begin{abstract}
We study global bifurcation for Yamabe-type equations on closed Riemannian manifolds with symmetries. For isometric group actions with interval orbit space and a suitable range of exponents, we prove the existence of pairwise disjoint unbounded continua bifurcating from the positive eigenvalues of the Laplace--Beltrami operator that admit invariant eigenfunctions. All nontrivial solutions in these continua change sign. The results cover critical and some supercritical exponents. On spheres and complex and quaternionic projective spaces, we obtain lower bounds on the number of geometrically distinct continua bifurcating from the same eigenvalue.
\end{abstract}

\maketitle

\section{Introduction}\label{sec:introduction}

Let \(M\) be a closed connected \(n\)-dimensional Riemannian manifold. We study the global structure of the weak solutions of the Yamabe-type equation
\begin{equation}\label{eq:yamabe-int}
-\Delta u-\lambda u=|u|^{p-2}u,
\end{equation}
where \(-\Delta\) is the Laplace--Beltrami operator on \(M\), \(p>2\), and \(\lambda\in\bR\). When \(n\geq3\) and the background metric has constant scalar curvature, this family includes, at the critical exponent \(p=2n/(n-2)\), the equation for conformal metrics of positive constant scalar curvature; see \cite{Aubin1982,LeeParker1987}.

Our main results give sufficient conditions for the existence of unbounded continua of sign-changing solutions, each meeting the trivial branch only at its bifurcation point. Continua bifurcating from different eigenvalues are therefore disjoint. We also use the symmetries of \(M\) to distinguish continua bifurcating from the same eigenvalue whose solutions are invariant under different groups of isometries.

Symmetry assumptions also play a central role in several related works on Yamabe-type equations. Clapp and Fern\'andez obtained multiplicity results for sign-changing solutions of critical Yamabe-type equations on compact manifolds in \cite{ClappFernandez2017}. Isoparametric functions and cohomogeneity one actions have also been used to construct such solutions. Henry obtained nodal (i.e., sign-changing) solutions constant along the level sets of an isoparametric function in \cite{Henry2019}. In \cite{FerPalPet2020}, Fern\'andez, Palmas and Petean obtained infinitely many nodal solutions of Yamabe-type equations on spheres for a range of exponents including supercritical ones, with arbitrarily many connected components of the nodal set and blow-up along focal submanifolds. They also constructed sequences of nodal solutions of the critical Yamabe equation on complex and quaternionic projective spaces with unbounded maximum norms. In \cite{ClappSaldanaSzulkin2021}, Clapp, Salda\~na and Szulkin used optimal partitions to construct solutions of the Yamabe equation on the sphere with a prescribed number of nodal domains, minimizing the energy among solutions invariant under the same block action and having that number of nodal domains.

These results concern existence and multiplicity for fixed equations. Bifurcation under variations of the background metric was studied by de Lima, Piccione and Zedda on product manifolds in \cite{LimaPiccioneZedda2012}, and by Bettiol and Piccione on spheres in \cite{BettiolPiccione2013}. These works establish bifurcation and local rigidity for the Yamabe problem using spectral properties of the Laplacian, changes in the Morse index, and equivariant bifurcation methods. For fixed background metrics, Petean and Barrantes Gonz\'alez used global bifurcation in \cite{PetBar2019} to obtain multiplicity of positive solutions to subcritical equations on products of spheres, invariant under the diagonal action. The same paper also establishes the existence of infinitely many invariant nodal solutions of the critical Yamabe equation on these products. Under more general isoparametric assumptions, Betancourt de la Parra, Julio-Batalla and Petean studied positive solutions bifurcating from a branch of constant solutions in \cite{BetJulPet2021}. Their global bifurcation argument separates branches by counting intersections with the constant solution and gives multiplicity of positive solutions for sufficiently large parameter values.

Here we study bifurcation from the zero branch and compare continua of solutions invariant under the same or different subgroups of a group of isometries. More precisely, let a compact Lie group \(G\) act isometrically on \(M\), and let \(K\subset G\) be a closed subgroup such that \(M/K\) is a closed interval.
When the space \((H^1(M))^K\) of \(K\)-invariant functions in \(H^1(M)\) is compactly embedded in \(L^p(M)\), Theorem~\ref{thm:unbounded-continuum} gives an unbounded continuum in \((H^1(M))^K\times\bR\) bifurcating from every \(K\)-invariant eigenvalue of \(-\Delta\). Each continuum meets the trivial branch only at its bifurcation point, and those arising from different eigenvalues in the same invariant space are disjoint. Their nontrivial solutions change sign when the bifurcation eigenvalue is positive and have one sign when it is zero. For \(n\geq3\), the improved compactness under symmetries provided by \cite{HebVau} allows the result to cover critical and some supercritical exponents whenever all \(K\)-orbits have positive dimension.

The separation argument regards a nontrivial solution \(u\) as an eigenfunction of \(A_u=-\Delta-|u|^{p-2}\), with eigenvalue \(\lambda\). The ordered \(K\)-invariant eigenvalues vary continuously with \(u\) by Lemma~\ref{lem:eigenvalue-continuity}. When they are simple, the position of \(\lambda\) in their ordering is constant on each connected component by Lemma~\ref{lem:index-continuity}. This prevents a component from joining different bifurcation points, as stated in Theorem~\ref{thm:spectral-index}. For actions with interval orbit space, we prove simplicity in Theorem~\ref{thm:schrodinger-simplicity} using the reduction to an ordinary differential equation established in Lemma~\ref{lem:invariant-eigenfunction-reduction}. The separation of bifurcation points then excludes the bounded alternative in the Rabinowitz global bifurcation theorem from \cite{Rabinowitz1971}, so the components are unbounded. Finally, Proposition~\ref{prop:sign-changing} shows that their nontrivial solutions change sign when they bifurcate from a positive eigenvalue and have one sign when they bifurcate from zero.

Symmetry and preservation of nodal properties were used by Healey and Kielh\"ofer in \cite{HealeyKielhofer1993} to prove separation and unboundedness of global branches of elliptic systems. A corresponding argument based on Sturm theory would require control of the endpoint behaviour of the reduced equation to ensure that the number of zeros remains constant along connected components. Our proof uses regularity only in the interior of the orbit interval and the integral identity defining eigenfunctions of \(A_u\) on \(M\) to establish simplicity, without requiring regularity at the nonprincipal orbits.

For homogeneous spaces \(M=G/H\), we give group-theoretic conditions under which Theorem~\ref{thm:unbounded-continuum} applies. Let \(G\) be compact, connected and semisimple, and let \(H,K\in\sub(G)\) be connected. Lemma~\ref{lem:positive-dimensional-orbits} shows that all \(K\)-orbits have positive dimension precisely when no conjugate of \(K\) is contained in \(H\). If the action of \(K\) on \(G/H\) has cohomogeneity one, Lemma~\ref{lem:orbit-spaces-interval} shows that its orbit space is a closed interval. Together, these conditions allow the bifurcation theorem to be applied at the critical exponent and for some supercritical exponents when \(\dim M\geq3\).

We then distinguish continua bifurcating from the same eigenvalue whose solutions are invariant under different subgroups of \(G\). Two solutions at the same parameter are \(G\)-geometrically equivalent if one is obtained from the other by the action of an element of \(G\). When \(G\) acts transitively on \(M\), Proposition~\ref{prop:geometric-separation} gives a condition on intersections of fixed-point spaces that excludes this equivalence between nontrivial solutions in different continua of sign-changing solutions. The argument follows the fixed-point-space method used in \cite{Marzantowicz2019}.

We apply these results to the critical equation on \(S^{m-1}\), \(\bC P^{m-1}\), and \(\bH P^{m-1}\), endowed with their standard metrics, where \(m\geq4\). We consider the block diagonal actions of \(SO(a)\times SO(m-a)\), \(S(U(a)\times U(m-a))\), and \(Sp(a)\times Sp(m-a)\), respectively, for \(2\leq a\leq\lfloor m/2\rfloor\). Theorem~\ref{thm:multiplicity-block-actions} gives at least \(\lfloor m/2\rfloor-1\) unbounded continua bifurcating from each positive eigenvalue common to these actions. Their nontrivial solutions change sign, and their nontrivial parts are pairwise geometrically distinct under \(SO(m)\), \(SU(m)\), and \(Sp(m)\), respectively. On the projective spaces, additional symmetry groups increase the bound to \(\lfloor m/2\rfloor\) at every second positive eigenvalue in the common sequence. These bounds count continua issuing from a common bifurcation point, with geometric separation along their entire nontrivial parts.

The paper is organized as follows. Section~\ref{sec:connected-components} develops the separation argument and proves simplicity and the assertions about signs for actions with interval orbit space. Section~\ref{sec:selman} establishes global bifurcation and verifies the assumptions for actions on homogeneous spaces. It then gives the geometric separation criterion and the multiplicity results on spheres and projective spaces. The appendix (Section~\ref{appendix}) contains a proof of the maximality property of symmetric subgroups and computations of the smallest orbit dimensions for the actions considered in the paper.

\section{Connected components of invariant solutions}\label{sec:connected-components}

Let \(M\) be a closed connected Riemannian manifold and let \(G\) be a compact Lie group acting isometrically on \(M\). We consider the Yamabe-type equation
\begin{equation}\label{eq:main}
-\Delta u-\lambda u=|u|^{p-2}u,
\end{equation}
where \(p>2\), \(\lambda\in\bR\), and \(-\Delta\) denotes the Laplace--Beltrami operator on \(M\).

Our aim in this section is to give sufficient conditions for the unboundedness of continua of nontrivial solutions and for all nonconstant solutions in these continua to change sign.

\subsection{Separation of connected components}\label{subsec:separation-components}

Let \(H^1(M)\) be the first Sobolev space on \(M\). Since the action of \(G\) on \(M\) is isometric, \(H^1(M)\), equipped with its standard inner product and the action \((gu)(x)=u(g^{-1}x)\), is an orthogonal \(G\)-representation. For a closed subgroup \(K\subset G\), let
\[
(H^1(M))^K=\{u\in H^1(M):ku=u\text{ for every }k\in K\}
\]
be its fixed-point space. We assume that \((H^1(M))^K\) is continuously embedded in \(L^p(M)\).

A pair \((u,\lambda)\in(H^1(M))^K\times\bR\) is called a \(K\)-invariant weak solution of \eqref{eq:main} if
\begin{equation*}
\int_M\langle\nabla u,\nabla v\rangle dM-\lambda\int_Muv dM=\int_M|u|^{p-2}uv dM
\end{equation*}
for every \(v\in(H^1(M))^K\).

For every \(\lambda\in\bR\), the pair \((0,\lambda)\) is a weak solution of \eqref{eq:main}. These solutions form the trivial branch
\(
\cT_K=\{(0,\lambda):\lambda\in\bR\}\subset(H^1(M))^K\times\bR.
\)
We denote the set of nontrivial \(K\)-invariant weak solutions by
\begin{equation*}
\cN_K=\left\{(u,\lambda)\in(H^1(M))^K\times\bR:u\neq0,\ (u,\lambda)\text{ is a weak solution of \eqref{eq:main}}\right\}.
\end{equation*}
A point \((0,\lambda)\in\cT_K\) is called a bifurcation point if \((0,\lambda)\in\overline{\cN_K}\).

We use the following spectral notation. For any \(K\)-equivariant operator \(T\), i.e., satisfying \(T(kv)=kT(v)\) for every \(k\in K\) and every \(v\) in its domain, we call the spectrum of its restriction to the \(K\)-fixed subspace the \(K\)-invariant spectrum of \(T\) and denote it by \(\sigma_K(T)\). An eigenvalue \(\mu\) of this restriction is called a \(K\)-invariant eigenvalue of \(T\). We denote its eigenspace by \(\bV_T^K(\mu)\) and call \(\dim\bV_T^K(\mu)\) the \(K\)-invariant multiplicity of \(\mu\).

We begin with a necessary condition for bifurcation, which is a standard consequence of the implicit function theorem.

\begin{Lemma}\label{lem:index-trivial}
If \((0,\lambda)\in\overline{\cN_K}\), then \(\lambda\in\sigma_K(-\Delta)\).
\end{Lemma}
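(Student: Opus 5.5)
We argue by contraposition: assume \(\lambda_0\notin\sigma_K(-\Delta)\) and show that \((0,\lambda_0)\) has a neighbourhood in \((H^1(M))^K\times\bR\) containing no point of \(\cN_K\). Write \(E=(H^1(M))^K\) with the standard inner product. By the Riesz representation theorem, define \(L\colon E\to E\) and \(N\colon E\to E\) by
\[
\langle Lu,v\rangle_{H^1}=\int_M uv\,dM,\qquad \langle N(u),v\rangle_{H^1}=\int_M|u|^{p-2}uv\,dM
\]
for all \(v\in E\). These are well defined on \(E\), because \(E\) is \(K\)-invariant and the integrands are \(K\)-invariant, and because \(E\hookrightarrow L^p(M)\) continuously. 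The weak solutions in \(E\) are then exactly the zeros of
\[
F(u,\lambda)=u-(1+\lambda)Lu-N(u),
\]
since \(\langle u,v\rangle_{H^1}=\int\langle\nabla u,\nabla v\rangle+\int uv\).

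The key steps are the following. First, \(N\) is \(C^1\) on \(E\) with \(N(0)=0\) and \(DN(0)=0\). This holds because the Nemytskii map \(u\mapsto|u|^{p-2}u\) is \(C^1\) from \(L^p\) to \(L^{p'}\) for \(p>2\), with derivative \(h\mapsto(p-1)|u|^{p-2}h\). Composing with the continuous embedding \(E\hookrightarrow L^p\) and its dual \(L^{p'}\to E^*\cong E\) gives the claim, together with the bound \(\|N(u)\|\le C\|u\|^{p-1}\). Hence \(F\) is \(C^1\), and \(D_uF(0,\lambda)=\Id-(1+\lambda)L\). Second, \(\Id-(1+\lambda_0)L\) is an isomorphism of \(E\). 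The operator \(L\) is bounded, self-adjoint and positive, so \(\Id-(1+\lambda_0)L\) is self-adjoint. It therefore suffices to show that it is bounded below, and for this I would use the invariant eigenvalue structure.

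This second step is the main obstacle. Since only a continuous, possibly noncompact, embedding into \(L^p\) is assumed, I would not rely on compactness of \(E\hookrightarrow L^p\). Instead I would note that \(H^1(M)\hookrightarrow L^2(M)\) is compact by Rellich's theorem because \(M\) is closed, so \(L\) is compact on \(E\). The kernel of \(\Id-(1+\lambda_0)L\) consists of \(u\in E\) with \(\int\langle\nabla u,\nabla v\rangle=\lambda_0\int uv\) for all \(v\in E\). By averaging test functions over \(K\) with Haar measure, this identity extends to all \(v\in H^1(M)\). Thus any such \(u\) is a \(K\)-invariant weak eigenfunction of \(-\Delta\) with eigenvalue \(\lambda_0\), and the hypothesis \(\lambda_0\notin\sigma_K(-\Delta)\) forces \(u=0\). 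By the Fredholm alternative for \(\Id-\) compact, the operator is then an isomorphism.

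Finally, apply the implicit function theorem at \((0,\lambda_0)\). There are neighbourhoods \(U\ni 0\) in \(E\) and \(I\ni\lambda_0\) such that \(F(u,\lambda)=0\) with \((u,\lambda)\in U\times I\) has a unique solution \(u=u(\lambda)\) for each \(\lambda\in I\). Since \(F(0,\lambda)=0\), this solution is \(u(\lambda)=0\), so \(U\times I\) contains no point of \(\cN_K\). Hence \((0,\lambda_0)\notin\overline{\cN_K}\). As an alternative to the implicit function theorem, one could use the direct estimate \(\|u\|\le C\|(\Id-(1+\lambda)L)u\|=C\|N(u)\|\le C'\|u\|^{p-1}\), which is uniform for \(\lambda\) near \(\lambda_0\); it excludes small nontrivial solutions because \(p-1>1\).
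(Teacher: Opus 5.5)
Your proof is correct and takes the route the paper indicates. The paper gives no details beyond calling the lemma a standard consequence of the implicit function theorem, and your argument supplies exactly those details: invertibility of \(\Id-(1+\lambda_0)L\) via compactness of \(L\) and the Fredholm alternative, followed by the implicit function theorem or, alternatively, the direct estimate \(\|u\|\le C'\|u\|^{p-1}\).
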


To study the connected components of \(\overline{\cN_K}\), we associate with every \(u\in(H^1(M))^K\) the linear operator \(A_u:(H^1(M))^K\longrightarrow((H^1(M))^K)^*\), defined by
\[
\langle A_uv,w\rangle=\int_M\langle\nabla v,\nabla w\rangle dM-\int_M|u|^{p-2}vw dM
\]
for \(v,w\in(H^1(M))^K\), where \(\langle A_uv,w\rangle\) denotes the value of the functional \(A_uv\) at \(w\). Formally, \(A_u=-\Delta-|u|^{p-2}\). If \((u,\lambda)\in\cN_K\), then \(A_uu=\lambda u\) in the weak sense, so \(\lambda\) is a \(K\)-invariant eigenvalue of \(A_u\) with eigenfunction \(u\).

This observation is the starting point for the main result of this section.

\begin{Theorem}\label{thm:spectral-index}
Assume that \(A_u\) has simple \(K\)-invariant spectrum for every \(u\in(H^1(M))^K\). Then every connected component of \(\overline{\cN_K}\) intersects \(\cT_K\) in at most one point.
\end{Theorem}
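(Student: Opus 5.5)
The idea is to attach to every point of \(\overline{\cN_K}\) an integer index and show that it is locally constant, hence constant on connected components, while distinct bifurcation points carry distinct indices. Since \(M\) is closed, \(A_u\) restricted to \((H^1(M))^K\) has compact resolvent: \(|u|^{p-2}\) acts as a compact perturbation through the compact embedding, or more precisely through the continuous embedding into \(L^p\) combined with the compactness of \(H^1\hookrightarrow L^2\). Its \(K\)-invariant spectrum is therefore a sequence \(\mu_1(u)<\mu_2(u)<\dots\to\infty\), strictly increasing because all eigenvalues are simple by assumption. For \((u,\lambda)\in\overline{\cN_K}\), the limiting equation \(A_uu=\lambda u\) holds by continuity of the weak formulation. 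When \(u\neq0\), \(\lambda\) is an eigenvalue of \(A_u\), and we define \(i(u,\lambda)\) as the unique \(k\) with \(\lambda=\mu_k(u)\). When \(u=0\), Lemma~\ref{lem:index-trivial} gives \(\lambda\in\sigma_K(-\Delta)=\sigma_K(A_0)\), and we set \(i(0,\lambda)=k\) with \(\lambda=\mu_k(0)\). On \(\cT_K\cap\overline{\cN_K}\) this index is injective, since the \(\mu_k(0)\) are distinct.

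Next we show that \(i\) is locally constant on \(\overline{\cN_K}\). We invoke the continuity of the ordered \(K\)-invariant eigenvalues \(u\mapsto\mu_k(u)\) from \((H^1(M))^K\) into \(\bR\) (Lemma~\ref{lem:eigenvalue-continuity}); heuristically this follows from the min–max characterization and the continuity of \(u\mapsto|u|^{p-2}\) into \(L^{p/(p-2)}\), uniformly on bounded sets of test functions. Fix \((u_0,\lambda_0)\) with index \(k\), and set \(\delta=\tfrac13\min(\mu_{k+1}(u_0)-\mu_k(u_0),\,\mu_k(u_0)-\mu_{k-1}(u_0))>0\); here simplicity at \(u_0\) is exactly what is used. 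For \((u,\lambda)\in\overline{\cN_K}\) close to \((u_0,\lambda_0)\), continuity yields \(|\mu_j(u)-\mu_j(u_0)|<\delta\) for \(j=k-1,k,k+1\), and \(|\lambda-\lambda_0|<\delta\). Since \(\lambda\) is an eigenvalue of \(A_u\) lying within \(2\delta\) of \(\mu_k(u)\) and farther than that from all the other eigenvalues (by the monotone ordering), \(\lambda=\mu_k(u)\). This holds uniformly whether \(u\) or \(u_0\) is zero, because membership of \(\lambda\) in \(\sigma_K(A_u)\) has been established at every point of \(\overline{\cN_K}\). This is the content of Lemma~\ref{lem:index-continuity}.

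A locally constant integer-valued function is constant on connected sets. Hence on any component \(\cC\) of \(\overline{\cN_K}\), every point of \(\cC\cap\cT_K\) has the same index \(k\), so it equals \((0,\mu_k(0))\). This shows that \(\cC\cap\cT_K\) has at most one point.

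The main obstacle is the eigenvalue continuity step. One must check that \(A_u\) is a bounded-below self-adjoint form with compact resolvent on \((H^1(M))^K\) using only the \(L^p\) embedding; the critical or supercritical case, where \(|u|^{p-2}v^2\) is controlled via Hölder with the \(L^p\) norm, is the delicate one. One must also justify that limits in \(\overline{\cN_K}\) still satisfy \(A_uu=\lambda u\) with \(u\) an eigenfunction. If \(u\to0\) this is replaced by Lemma~\ref{lem:index-trivial}. Otherwise the nonlinear term converges in the dual space by continuity of the Nemytskii map \(L^p\to L^{p/(p-1)}\).
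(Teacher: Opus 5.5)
Your proposal is correct and follows the paper's route. You define the index $\cI(u,\mu_j(u))=j$ on $\overline{\cN_K}$ using simplicity, Lemma~\ref{lem:index-trivial}, and, as you note, closedness of the set of nontrivial weak solutions (which the paper uses implicitly); it is locally constant by the continuity of the ordered eigenvalues in Lemma~\ref{lem:eigenvalue-continuity}, hence constant on components, which separates the points $(0,\mu_j)$. The only detail to adjust is that at the lowest index your $\delta$ must come from the upper gap alone, just as the paper treats $j=0$ separately.
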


We postpone the proof until we have established several auxiliary lemmas concerning the ordered eigenvalues of \(A_u\) and their continuity with respect to \(u\). These will allow us to define an integer-valued index on \(\overline{\cN_K}\) that is constant on connected components.

To study these eigenvalues, we will construct a compact self-adjoint operator to which the spectral theorem applies. Since \(A_u\) need not be invertible, we first add a sufficiently large constant \(c\). We will show that, for every \(f\in(L^2(M))^K\), the equation
\begin{equation}\label{eq:shifted-schrodinger}
(A_u+c)v=f
\end{equation}
has a unique weak solution \(v\in(H^1(M))^K\). The map assigning this solution to \(f\) will be denoted by \((A_u+c)^{-1}\). Viewed as an operator on \((L^2(M))^K\), it will be compact and self-adjoint, and its eigenvalues will determine those of \(A_u\).

For \(u\in(H^1(M))^K\) and \(c\geq0\), define the bilinear form \(B_u:(H^1(M))^K\times(H^1(M))^K\to\bR\) by
\[
B_u(v,w)=\langle(A_u+c)v,w\rangle=\int_M\langle\nabla v,\nabla w\rangle dM-\int_M|u|^{p-2}vw dM+c\int_Mvw dM.
\]
Thus \(v\in(H^1(M))^K\) is a weak solution of \eqref{eq:shifted-schrodinger} if
\(
B_u(v,w)=\int_Mfw dM
\)
for every \(w\in(H^1(M))^K\).

\begin{Lemma}\label{lem:shifted-schrodinger}
For every \(u\in(H^1(M))^K\) and all sufficiently large \(c>0\), equation \eqref{eq:shifted-schrodinger} has a unique weak solution \(v\in(H^1(M))^K\) for every \(f\in(L^2(M))^K\). Moreover, the solution satisfies \(\|v\|_{H^1(M)}\leq2\|f\|_{L^2(M)}\).
\end{Lemma}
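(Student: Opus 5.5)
The plan is to apply the Lax--Milgram theorem on the Hilbert space \((H^1(M))^K\) to the form \(B_u\). Once \(B_u\) is shown to be bounded and coercive with constant \(1/2\), existence, uniqueness, and the estimate all follow.

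\emph{Boundedness.} Since \(u\in(H^1(M))^K\subset L^p(M)\), we have \(|u|^{p-2}\in L^{p/(p-2)}(M)\). By Hölder's inequality with exponents \(p/(p-2)\), \(p\), \(p\),
\[
\Bigl|\int_M|u|^{p-2}vw\,dM\Bigr|\le\|u\|_{L^p}^{p-2}\|v\|_{L^p}\|w\|_{L^p}\le C\|u\|_{L^p}^{p-2}\|v\|_{H^1}\|w\|_{H^1},
\]
using the assumed continuous embedding of \((H^1(M))^K\) into \(L^p(M)\). The gradient and \(L^2\) terms are clearly bounded, so \(B_u\) is bounded.

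\emph{Coercivity.} This is the main step, because a mere embedding constant would require smallness of \(\|u\|_{L^p}\), and we have no such smallness. I would split the potential using \(V=|u|^{p-2}\in L^{q}(M)\) with \(q=p/(p-2)\). Fix \(\epsilon>0\) and choose \(R\) with \(\|V\mathbf 1_{\{V>R\}}\|_{L^q}\le\epsilon\), which is possible by absolute continuity of the integral. Then
\[
\int_M Vv^2\,dM\le R\|v\|_{L^2}^2+\epsilon\|v\|_{L^p}^2\le R\|v\|_{L^2}^2+\epsilon C^2\|v\|_{H^1}^2.
\]
Taking \(\epsilon C^2\le 1/2\) gives
\[
B_u(v,v)\ge\|v\|_{H^1}^2-\tfrac12\|v\|_{H^1}^2+(c-1-R)\|v\|_{L^2}^2,
\]
where \(\|v\|_{H^1}^2=\|\nabla v\|_{L^2}^2+\|v\|_{L^2}^2\). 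For \(c\ge 1+R\) this yields \(B_u(v,v)\ge\frac12\|v\|_{H^1}^2\). This is exactly where "sufficiently large \(c\)" enters, and \(c\) depends on \(u\).

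\emph{Conclusion.} The functional \(w\mapsto\int_M fw\,dM\) is bounded on \((H^1(M))^K\) with norm at most \(\|f\|_{L^2}\). Lax--Milgram therefore gives a unique \(v\in(H^1(M))^K\) with \(B_u(v,w)=\int_M fw\,dM\) for every \(w\in(H^1(M))^K\). Testing with \(w=v\) gives
\[
\tfrac12\|v\|_{H^1}^2\le B_u(v,v)=\int_M fv\,dM\le\|f\|_{L^2}\|v\|_{H^1},
\]
hence \(\|v\|_{H^1}\le2\|f\|_{L^2}\).

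Invariance causes no difficulty, since we work entirely in the closed subspace \((H^1(M))^K\) and the weak formulation uses only invariant test functions. If \(p\) is critical, or merely embedded rather than compactly embedded, the truncation argument above still works because it uses only \(V\in L^{q}\) with \(q<\infty\).
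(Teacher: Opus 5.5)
Your proof is correct and is essentially the paper's argument. You truncate \(|u|^{p-2}\) at a level \(R\) and control the tail by H\"older's inequality and the embedding into \(L^p(M)\); your \(L^{p/(p-2)}\)-norm of the tail is exactly the paper's coefficient \(\bigl(\int_{\{|u|^{p-2}>R\}}|u|^p\,dM\bigr)^{(p-2)/p}\). You then take \(c\geq R+1\) to obtain coercivity with constant \(1/2\), and finish with Lax--Milgram and the test function \(w=v\), as the paper does.
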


\begin{proof}
Fix \(u\in(H^1(M))^K\). Splitting the integral \(\int_M|u|^{p-2}v^2 dM\) over the sets where \(|u|^{p-2}\leq R\) and \(|u|^{p-2}>R\), and applying H\"older's inequality and the continuous embedding of \((H^1(M))^K\) into \(L^p(M)\), we obtain
\[
\int_M|u|^{p-2}v^2 dM\leq R\|v\|_{L^2(M)}^2+C\left(\int_{\{|u|^{p-2}>R\}}|u|^p dM\right)^{\frac{p-2}{p}}\|v\|_{H^1(M)}^2
\]
for every \(R>0\) and \(v\in(H^1(M))^K\), where \(C>0\) is independent of \(R\) and \(v\).
 Since \(u\in L^p(M)\), the coefficient of \(\|v\|_{H^1(M)}^2\) tends to zero as \(R\to\infty\). Choose \(R\) so that this coefficient is at most \(1/2\). For every \(c\geq R+1\), it follows that
\[
B_u(v,v)\geq\frac12\int_M|\nabla v|^2 dM+\left(c-R-\frac12\right)\int_Mv^2 dM\geq\frac12\|v\|_{H^1(M)}^2.
\]
Thus \(B_u\) is coercive. It is also continuous by the Cauchy--Schwarz and H\"older inequalities and the assumed embedding.

For \(f\in(L^2(M))^K\), the functional \(w\mapsto\int_Mfw dM\) is bounded on \((H^1(M))^K\). The Lax--Milgram theorem therefore gives a unique \(v\in(H^1(M))^K\) satisfying \(B_u(v,w)=\int_Mfw dM\) for every \(w\in(H^1(M))^K\). Taking \(w=v\), we obtain
\[
\frac12\|v\|_{H^1(M)}^2\leq B_u(v,v)=\int_Mfv dM\leq\|f\|_{L^2(M)}\|v\|_{H^1(M)},
\]
which gives the asserted estimate.
\end{proof}
Fix \(u\in(H^1(M))^K\) and choose \(c>0\) as in Lemma~\ref{lem:shifted-schrodinger}. Identifying each \(f\in(L^2(M))^K\) with the bounded linear functional \(w\mapsto\int_Mfw dM\) on \((H^1(M))^K\), we denote by \((A_u+c)^{-1}f\) the unique weak solution of \eqref{eq:shifted-schrodinger}. Thus
\begin{equation}\label{eq:Tu-weak}
B_u((A_u+c)^{-1}f,w)=\int_Mfw dM
\end{equation}
for every \(w\in(H^1(M))^K\).

By Lemma~\ref{lem:shifted-schrodinger}, the linear operator \((A_u+c)^{-1}:(L^2(M))^K\longrightarrow(H^1(M))^K\) satisfies \(\|(A_u+c)^{-1}f\|_{H^1(M)}\leq2\|f\|_{L^2(M)}\). Since \((H^1(M))^K\subset(L^2(M))^K\) and \(\|v\|_{L^2(M)}\leq\|v\|_{H^1(M)}\), we may also take \((L^2(M))^K\) as its codomain. With this convention, \((A_u+c)^{-1}:(L^2(M))^K\longrightarrow(L^2(M))^K\) is a bounded linear operator.
\begin{Lemma}\label{lem:resolvent-properties}
The operator \((A_u+c)^{-1}:(L^2(M))^K\to(L^2(M))^K\) is compact, self-adjoint, positive and injective.
\end{Lemma}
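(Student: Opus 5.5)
Write \(T=(A_u+c)^{-1}\) on \((L^2(M))^K\). We verify the four properties directly from the weak identity \eqref{eq:Tu-weak} and the bounds of Lemma~\ref{lem:shifted-schrodinger}.

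\emph{Compactness.} By Lemma~\ref{lem:shifted-schrodinger}, \(T\) maps \((L^2(M))^K\) boundedly into \((H^1(M))^K\), with \(\|Tf\|_{H^1(M)}\le 2\|f\|_{L^2(M)}\). The embedding \(H^1(M)\hookrightarrow L^2(M)\) is compact by Rellich--Kondrachov, since \(M\) is closed, and so is its restriction to the closed subspace \((H^1(M))^K\). Hence \(T\) is the composition of a bounded operator with a compact embedding and is therefore compact.

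\emph{Self-adjointness and positivity.} The form \(B_u\) is symmetric. For \(f,g\in(L^2(M))^K\), apply \eqref{eq:Tu-weak} to \(f\) with test function \(w=Tg\), and to \(g\) with test function \(w=Tf\). This gives
\[
\int_M f\,Tg\,dM=B_u(Tf,Tg)=B_u(Tg,Tf)=\int_M g\,Tf\,dM,
\]
so \(T\) is self-adjoint. Taking \(g=f\) and using the coercivity estimate from the proof of Lemma~\ref{lem:shifted-schrodinger}, we get
\[
\langle Tf,f\rangle_{L^2}=B_u(Tf,Tf)\ge\tfrac12\|Tf\|_{H^1(M)}^2\ge0 .
\]
Thus \(T\) is positive. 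All these quantities are finite because the constant \(c\) is fixed as in Lemma~\ref{lem:shifted-schrodinger}, so the form is continuous and coercive.

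\emph{Injectivity.} This is the only step that needs some care, because \(f\) lies merely in \(L^2\) and is not a test function. Suppose \(Tf=0\). Then \eqref{eq:Tu-weak} gives \(\int_M fw\,dM=B_u(0,w)=0\) for every \(w\in(H^1(M))^K\). We must conclude that \(f=0\) in \((L^2(M))^K\). Since \(C^\infty(M)\subset H^1(M)\) is dense in \(L^2(M)\), the space \(H^1(M)\) is dense in \(L^2(M)\).

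To pass to invariant functions, use the averaging projection \(Pw=\int_K kw\,dk\) with respect to normalized Haar measure. It is an \(L^2\)-continuous projection onto the fixed-point space, and it maps \(H^1(M)\) into \((H^1(M))^K\), because the action on \(H^1(M)\) is orthogonal. Hence \(P(H^1(M))\subset(H^1(M))^K\) is dense in \(P(L^2(M))=(L^2(M))^K\). Since \(f\) is orthogonal to a dense subset of \((L^2(M))^K\) and itself belongs to that space, it follows that \(f=0\).

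Alternatively, one can argue more directly. Since \(f\) is \(K\)-invariant, \(\int_M fw\,dM=\int_M f\,Pw\,dM\) for any \(w\in H^1(M)\). So \(f\) annihilates all of \(H^1(M)\), and density of \(H^1(M)\) in \(L^2(M)\) gives \(f=0\).
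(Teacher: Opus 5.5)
Your proof is correct and follows the same route as the paper. Compactness comes from the bound in Lemma~\ref{lem:shifted-schrodinger} combined with Rellich--Kondrachov, and self-adjointness and positivity come from the symmetry and coercivity of \(B_u\) through \eqref{eq:Tu-weak}; for injectivity, the paper only says that \(v=0\) solving \eqref{eq:shifted-schrodinger} forces \(f=0\), and your Haar-averaging argument correctly supplies the implicit density of \((H^1(M))^K\) in \((L^2(M))^K\).
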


\begin{proof}
By Lemma~\ref{lem:shifted-schrodinger}, \((A_u+c)^{-1}\) maps bounded subsets of \((L^2(M))^K\) into bounded subsets of \((H^1(M))^K\). The Rellich--Kondrachov theorem therefore implies its compactness as an operator on \((L^2(M))^K\).

For \(f,h\in(L^2(M))^K\), the symmetry of \(B_u\) and \eqref{eq:Tu-weak} give
\[
\begin{aligned}
\langle(A_u+c)^{-1}f,h\rangle_{L^2(M)}
=B_u((A_u+c)^{-1}f,(A_u+c)^{-1}h)=\langle f,(A_u+c)^{-1}h\rangle_{L^2(M)},
\end{aligned}
\]
so \((A_u+c)^{-1}\) is self-adjoint. Moreover, taking \(w=(A_u+c)^{-1}f\) in \eqref{eq:Tu-weak} and using coercivity, we obtain
\[
\langle(A_u+c)^{-1}f,f\rangle_{L^2(M)}=B_u((A_u+c)^{-1}f,(A_u+c)^{-1}f)\geq\frac12\|(A_u+c)^{-1}f\|_{H^1(M)}^2\geq0.
\]
Thus the operator is positive. Finally, if \((A_u+c)^{-1}f=0\), then \(v=0\) solves \eqref{eq:shifted-schrodinger}, so \(f=0\). Thus \((A_u+c)^{-1}\) is injective.
\end{proof}

From now on, we assume that \((L^2(M))^K\) is infinite-dimensional to avoid treating the finite-dimensional case separately. By Lemma~\ref{lem:resolvent-properties} and the spectral theorem for compact self-adjoint operators, the nonzero elements of \(\sigma_K((A_u+c)^{-1})\) are positive eigenvalues. Ordered decreasingly and counted with their \(K\)-invariant multiplicities, they form a sequence
\(
\alpha_0(u)\geq\alpha_1(u)\geq\alpha_2(u)\geq\cdots>0,
\)
with \(\alpha_j(u)\to0\) as \(j\to\infty\). The corresponding eigenvalues of \(A_u\) are
\begin{equation}\label{eq:Au-eigenvalues}
\mu_j(u)=\alpha_j(u)^{-1}-c.
\end{equation}
These are all the \(K\)-invariant eigenvalues of \(A_u\), ordered increasingly and counted with their \(K\)-invariant multiplicities.

When \(u=0\), the operator \(A_0\) is \(-\Delta\), and we write \(\mu_j=\mu_j(0)\). Its smallest eigenvalue is \(0\), whose eigenspace consists precisely of the constant functions because \(M\) is connected. Thus
\(
0=\mu_0<\mu_1\leq\mu_2\leq\cdots.
\)

\begin{Lemma}\label{lem:shift-independence}
The numbers \(\mu_j(u)\) defined by \eqref{eq:Au-eigenvalues} are independent of the choice of \(c\).
\end{Lemma}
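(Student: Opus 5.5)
The plan is to show that, for any two admissible constants \(c,c'>0\) (both large enough that Lemma~\ref{lem:shifted-schrodinger} applies), the nonzero eigenvalues of \((A_u+c)^{-1}\) and \((A_u+c')^{-1}\), counted with multiplicity, correspond under \(\alpha\mapsto\alpha^{-1}-c\) and \(\alpha'\mapsto\alpha'^{-1}-c'\) to the same set of numbers with the same multiplicities. The tool is a characterization of \(\mu_j(u)\) that does not mention \(c\). Concretely, we will prove that for any admissible \(c\) and any \(\alpha>0\), a function \(v\in(L^2(M))^K\) satisfies \((A_u+c)^{-1}v=\alpha v\) if and only if \(v\in(H^1(M))^K\) and
\[
\int_M\langle\nabla v,\nabla w\rangle dM-\int_M|u|^{p-2}vw\,dM=\mu\int_Mvw\,dM\quad\text{for all }w\in(H^1(M))^K,
\]
with \(\mu=\alpha^{-1}-c\). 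In other words, \(v\) is a weak \(K\)-invariant eigenfunction of \(A_u\) with eigenvalue \(\mu\), and this condition involves no \(c\).

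For the forward direction, suppose \((A_u+c)^{-1}v=\alpha v\). Then \(\alpha v\) lies in \((H^1(M))^K\), and hence so does \(v\). By \eqref{eq:Tu-weak}, \(B_u(\alpha v,w)=\int_M vw\,dM\) for all \(w\). Dividing by \(\alpha\) and subtracting \(c\int_M vw\,dM\) gives the displayed identity with \(\mu=\alpha^{-1}-c\).

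For the converse, assume the displayed identity holds. Then \(B_u(v,w)=(\mu+c)\int_M vw\,dM\). Note that \(\mu+c>0\) is automatic for \(v\ne0\): take \(w=v\) and use coercivity, \(B_u(v,v)\ge\tfrac12\|v\|_{H^1}^2>0\). Consequently \(v/(\mu+c)\) is the weak solution of \eqref{eq:shifted-schrodinger} with right-hand side \(v\). By the uniqueness part of Lemma~\ref{lem:shifted-schrodinger}, \((A_u+c)^{-1}v=(\mu+c)^{-1}v\).

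It follows that, for each \(\mu\), the eigenspace of \((A_u+c)^{-1}\) for \(\alpha=(\mu+c)^{-1}\) coincides with the space \(E_\mu\) of weak \(K\)-invariant eigenfunctions of \(A_u\) for \(\mu\). This space does not depend on \(c\), and the same holds for \(c'\). So the multiset \(\{\mu_j(u)\}\), listing each \(\mu\) with multiplicity \(\dim E_\mu\), is the same for \(c\) and \(c'\). Finally, \(\alpha\mapsto\alpha^{-1}-c\) is decreasing, so the decreasing order of the \(\alpha_j\) becomes the increasing order of the \(\mu_j\). Since the sorted sequence is determined by the multiset, \(\mu_j(u)\) is independent of \(c\) for every \(j\).

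The only delicate step is the positivity \(\mu+c>0\) in the converse, which is what makes \(\alpha=(\mu+c)^{-1}\) well defined and positive. Coercivity, as established in the proof of Lemma~\ref{lem:shifted-schrodinger}, handles it. Everything else is bookkeeping with \eqref{eq:Tu-weak}.
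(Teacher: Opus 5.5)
Your proposal is correct and is essentially the paper's argument. The paper also identifies the eigenspaces of the two resolvents through the weak eigenvalue equation for \(A_u\), using uniqueness from Lemma~\ref{lem:shifted-schrodinger} and the monotonicity of the reparametrization; it compares the two shifts directly (after assuming \(\widetilde c\geq c\)) rather than passing through the \(c\)-free spaces \(E_\mu\). Your explicit coercivity argument for \(\mu+c>0\) also makes rigorous the reverse inclusion of the nonzero spectra, which the paper passes over quickly.
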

\begin{proof}
Let \(\widetilde c\) be another sufficiently large shift as in Lemma~\ref{lem:shifted-schrodinger}, and let \(\widetilde\alpha_j(u)\) denote the nonzero eigenvalues of \((A_u+\widetilde c)^{-1}\), ordered decreasingly and counted with their \(K\)-invariant multiplicities. Interchanging the two shifts if necessary, we may assume that \(\widetilde c\geq c\).

We first show that
\begin{equation}\label{eq:eigeq}
\bV_{(A_u+c)^{-1}}^K(\alpha_j(u))=\bV_{(A_u+\widetilde c)^{-1}}^K\bigl((\alpha_j(u)^{-1}+\widetilde c-c)^{-1}\bigr).
\end{equation}
Since \(\alpha_j(u)^{-1}+\widetilde c-c\neq0\), the weak formulation of \eqref{eq:shifted-schrodinger} and uniqueness in Lemma~\ref{lem:shifted-schrodinger} give
\[
\begin{aligned}
(A_u+c)^{-1}v=\alpha_j(u)v
&\Longleftrightarrow (A_u+c)v=\alpha_j(u)^{-1}v\\
&\Longleftrightarrow (A_u+\widetilde c)v=(\alpha_j(u)^{-1}+\widetilde c-c)v\\
&\Longleftrightarrow (A_u+\widetilde c)^{-1}v=(\alpha_j(u)^{-1}+\widetilde c-c)^{-1}v
\end{aligned}
\]
for \(v\in(H^1(M))^K\). This proves \eqref{eq:eigeq}.
Therefore
\[
\sigma_K((A_u+\widetilde c)^{-1})\setminus\{0\}=\bigl\{(\alpha_j(u)^{-1}+\widetilde c-c)^{-1}:\alpha_j(u)\in\sigma_K((A_u+c)^{-1})\setminus\{0\}\bigr\}.
\]
By \eqref{eq:eigeq}, the eigenvalues \(\alpha_j(u)\) and \((\alpha_j(u)^{-1}+\widetilde c-c)^{-1}\) have the same \(K\)-invariant multiplicity. Moreover, since \(t\mapsto(t^{-1}+\widetilde c-c)^{-1}\) is strictly increasing on \((0,\infty)\), the numbers \((\alpha_j(u)^{-1}+\widetilde c-c)^{-1}\) are ordered decreasingly as \(j\) increases. Hence \(\widetilde\alpha_j(u)=(\alpha_j(u)^{-1}+\widetilde c-c)^{-1}\) for every \(j\geq0\), and therefore \( \widetilde\alpha_j(u)^{-1}-\widetilde c=\alpha_j(u)^{-1}-c. \)
\end{proof}

We next establish continuity of these ordered eigenvalues with respect to \(u\).
\begin{Lemma}\label{lem:eigenvalue-continuity}
For every \(j\geq0\), the map \((H^1(M))^K\ni u\longmapsto\mu_j(u)\in\bR\) is continuous.
\end{Lemma}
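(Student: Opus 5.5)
We use the Courant--Fischer min-max characterization of the ordered eigenvalues and show that the quadratic form of \(A_u\) depends continuously on \(u\), uniformly on \(H^1\)-bounded sets of test functions.

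\emph{Min-max formula.} By Lemma~\ref{lem:resolvent-properties} and the spectral theorem, the eigenvalues \(\alpha_j(u)\) of the compact, positive, self-adjoint operator \((A_u+c)^{-1}\) satisfy the max-min formula on \((L^2(M))^K\). Translating through \eqref{eq:Tu-weak} and \eqref{eq:Au-eigenvalues}, we expect
\[
\mu_j(u)=\inf_{\substack{E\subset(H^1(M))^K\\ \dim E=j+1}}\ \sup_{v\in E,\ \|v\|_{L^2(M)}=1}\Bigl(\int_M|\nabla v|^2dM-\int_M|u|^{p-2}v^2dM\Bigr).
\]
To justify this, we use the eigenfunctions of \((A_u+c)^{-1}\), which lie in \((H^1(M))^K\) and are \(B_u\)-orthogonal, together with the standard Rayleigh quotient argument for \(B_u(v,v)/\|v\|_{L^2}^2\). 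By Lemma~\ref{lem:shift-independence}, the shift \(c\) drops out, so the formula holds with one \(c\) chosen for each \(u\).

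\emph{Uniform continuity of the form.} Fix \(u_0\) and let \(u\to u_0\) in \((H^1(M))^K\). By H\"older's inequality and the embedding into \(L^p\),
\[
\Bigl|\int_M(|u|^{p-2}-|u_0|^{p-2})v^2dM\Bigr|\leq\bigl\||u|^{p-2}-|u_0|^{p-2}\bigr\|_{L^{p/(p-2)}}\|v\|_{L^p}^2\leq\varepsilon(u)\|v\|_{H^1}^2,
\]
where \(\varepsilon(u)\to0\), because the Nemytskii map \(u\mapsto|u|^{p-2}\) is continuous from \(L^p\) to \(L^{p/(p-2)}\).

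\emph{Main obstacle.} The error is controlled by \(\|v\|_{H^1}^2\), not \(\|v\|_{L^2}^2\), so we must restrict to test functions with controlled \(H^1\) norm.
\begin{itemize}
\item \emph{Upper bound.} Take \(E\) spanned by the first \(j+1\) eigenfunctions of \(A_{u_0}\). On the \(L^2\)-unit sphere of this finite-dimensional space the \(H^1\) norm is bounded by some \(C_0\), so \(\mu_j(u)\leq\mu_j(u_0)+\varepsilon(u)C_0^2\). This gives \(\limsup\mu_j(u)\leq\mu_j(u_0)\).
\item \emph{Lower bound.} Use the coercivity estimate from the proof of Lemma~\ref{lem:shifted-schrodinger}, with \(R\) chosen uniformly for \(u\) near \(u_0\). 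This is possible because \(\int_{\{|u|^{p-2}>R\}}|u|^p\) is uniformly small for \(u\) in an \(L^p\)-neighborhood of \(u_0\), since \(L^p\)-convergent families are uniformly integrable. It yields \(Q_u(v)\geq\frac12\|v\|_{H^1}^2-C\|v\|_{L^2}^2\) with \(C\) uniform. Hence any \(v\) with \(\|v\|_{L^2}=1\) and \(Q_u(v)\leq\mu_j(u_0)+1\) satisfies \(\|v\|_{H^1}^2\leq C'\). In the min-max for \(u\), restrict to near-optimal subspaces \(E\). The bound \(\mu_j(u)\leq\mu_j(u_0)+1\) from the first step guarantees that near-optimal \(E\) have all \(L^2\)-unit vectors with \(Q_u(v)\leq\mu_j(u_0)+1\), so they lie in this \(H^1\)-bounded set. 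There \(Q_{u_0}(v)\leq Q_u(v)+\varepsilon(u)C'\), which gives \(\mu_j(u_0)\leq\mu_j(u)+\varepsilon(u)C'\).
\end{itemize}

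Combining the two bounds proves continuity.
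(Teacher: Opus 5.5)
Your proof is correct, but it takes a different route from the paper. The paper fixes a common shift $c$ and gets the uniform coercivity $B_{u_n}(v,v)\geq\frac14\|v\|_{H^1(M)}^2$ by perturbing from the coercivity at the limit. It then uses the identity for the difference of the two resolvents to prove $(A_{u_n}+c)^{-1}\to(A_u+c)^{-1}$ in operator norm on $(L^2(M))^K$, and reads off $\alpha_j(u_n)\to\alpha_j(u)$ from the max--min principle for compact self-adjoint operators.

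You never pass to resolvents. You work with the Courant--Fischer characterization of $\mu_j(u)$ over $(j+1)$-dimensional subspaces of the form domain $(H^1(M))^K$. The perturbation $\int_M(|u|^{p-2}-|u_0|^{p-2})v^2\,dM$ is controlled only by $\|v\|_{H^1(M)}^2$, and you handle this with a priori $H^1$ bounds on the relevant test vectors. For the upper bound these hold trivially on the fixed finite-dimensional test space; for the lower bound they come from uniform coercivity on near-optimal subspaces.

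What each route buys:
\begin{itemize}
\item The paper relies only on standard facts about compact operators on $L^2$, and gives convergence of all $\alpha_j$ at once.
\item Yours must justify the form-domain min--max formula. This needs not just $B_u$-orthogonality of the eigenfunctions $e_k$ but their completeness in $((H^1(M))^K,B_u)$. That follows from $B_u(w,e_k)=(\mu_k(u)+c)\langle w,e_k\rangle_{L^2(M)}$ and completeness of the $e_k$ in $(L^2(M))^K$. In return you get an explicit local bound $|\mu_j(u)-\mu_j(u_0)|\leq C_j\||u|^{p-2}-|u_0|^{p-2}\|_{L^{p/(p-2)}(M)}$ directly for the $\mu_j$.
\end{itemize}

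Two simplifications are available. First, uniform coercivity near $u_0$ follows at once from your own form estimate, since $Q_u(v)\geq Q_{u_0}(v)-\varepsilon(u)\|v\|_{H^1(M)}^2$; this is how the paper obtains it, so the uniform-integrability argument is unnecessary. Second, in the lower bound you can take $E$ to be the span of the first $j+1$ eigenfunctions of $A_u$. This subspace attains the infimum, so the near-optimality bookkeeping disappears.
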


\begin{proof}
Let \(u_n\to u\) in \((H^1(M))^K\). We first choose a common shift \(c>0\) for which the inverse operators \((A_{u_n}+c)^{-1}\) converge to \((A_u+c)^{-1}\) in operator norm on \((L^2(M))^K\).

Choose \(c>0\) as in Lemma~\ref{lem:shifted-schrodinger} for \(u\), so that \(B_u(v,v)\geq\frac12\|v\|_{H^1(M)}^2\). H\"older's inequality and the assumed continuous embedding of \((H^1(M))^K\) into \(L^p(M)\) give a constant \(C>0\), independent of \(n\), such that
\begin{equation}\label{eq:bilinear-form-difference}
|B_{u_n}(v,w)-B_u(v,w)|
\leq C\bigl\||u_n|^{p-2}-|u|^{p-2}\bigr\|_{L^{p/(p-2)}(M)}\|v\|_{H^1(M)}\|w\|_{H^1(M)}
\end{equation}
for every \(v,w\in(H^1(M))^K\).
Since \(u_n\to u\) in \((H^1(M))^K\), the same embedding gives \(u_n\to u\) in \(L^p(M)\), and hence \(|u_n|^{p-2}\to|u|^{p-2}\) in \(L^{p/(p-2)}(M)\).
Applying \eqref{eq:bilinear-form-difference} with \(w=v\), we therefore obtain
\begin{equation}\label{eq:uniform-coercivity}
B_{u_n}(v,v)\geq\frac14\|v\|_{H^1(M)}^2
\end{equation}
for every \(v\in(H^1(M))^K\) and all sufficiently large \(n\). The Lax--Milgram theorem thus defines \((A_{u_n}+c)^{-1}\) with this same shift \(c\).

Fix \(f\in(L^2(M))^K\). By \eqref{eq:Tu-weak},
\[
B_{u_n}((A_{u_n}+c)^{-1}f,w)=\int_Mfw dM=B_u((A_u+c)^{-1}f,w)
\]
for every \(w\in(H^1(M))^K\). From this and using linearity,
\begin{equation}\label{eq:resolvent-difference-identity}
B_{u_n}\bigl((A_{u_n}+c)^{-1}f-(A_u+c)^{-1}f,w\bigr)=B_u((A_u+c)^{-1}f,w)-B_{u_n}((A_u+c)^{-1}f,w)
\end{equation}
for every \(w\in(H^1(M))^K\). Choosing \(w=(A_{u_n}+c)^{-1}f-(A_u+c)^{-1}f\) in \eqref{eq:resolvent-difference-identity} and applying \eqref{eq:uniform-coercivity}, \eqref{eq:bilinear-form-difference}, and Lemma~\ref{lem:shifted-schrodinger}, we obtain
\[
\bigl\|(A_{u_n}+c)^{-1}f-(A_u+c)^{-1}f\bigr\|_{H^1(M)}
\leq 8C\bigl\||u_n|^{p-2}-|u|^{p-2}\bigr\|_{L^{p/(p-2)}(M)}\|f\|_{L^2(M)}.
\]
Since \(\|v\|_{L^2(M)}\leq\|v\|_{H^1(M)}\), the same bound holds with the \(L^2(M)\)-norm on the left. Taking the supremum over \(f\in(L^2(M))^K\) with \(\|f\|_{L^2(M)}\leq1\), and using \(|u_n|^{p-2}\to|u|^{p-2}\) in \(L^{p/(p-2)}(M)\), proves that \((A_{u_n}+c)^{-1}\to(A_u+c)^{-1}\) in operator norm on \((L^2(M))^K\).

Hence, by the max--min principle, the ordered eigenvalues of these compact positive self-adjoint operators satisfy \(\alpha_j(u_n)\to\alpha_j(u)\) for every \(j\geq0\). Since \(\alpha_j(u)>0\), \eqref{eq:Au-eigenvalues} and Lemma~\ref{lem:shift-independence} yield
\(
\mu_j(u_n)=\alpha_j(u_n)^{-1}-c\longrightarrow\alpha_j(u)^{-1}-c=\mu_j(u).
\)
\end{proof}

Under the simplicity assumption of Theorem~\ref{thm:spectral-index}, every point of \(\overline{\cN_K}\) can be written uniquely as \((u,\mu_j(u))\) for some \(j\geq0\). Indeed, if \(u\neq0\), then \(A_uu=\lambda u\), while for \(u=0\) the assertion follows from Lemma~\ref{lem:index-trivial}. In both cases, simplicity ensures the uniqueness of \(j\). We therefore define the index \(\cI:\overline{\cN_K}\to\mathbb N_0\) by \(\cI(u,\mu_j(u))=j\).

\begin{Lemma}\label{lem:index-continuity}
The index
\(
    \cI:\overline{\cN_K}\to\mathbb N_0
\)
is continuous.
\end{Lemma}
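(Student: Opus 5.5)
Since \(\mathbb N_0\) is discrete, continuity of \(\cI\) means it is locally constant. We will therefore show the following: every \((u,\lambda)\in\overline{\cN_K}\) has a neighbourhood in \(\overline{\cN_K}\) on which \(\cI\) equals \(\cI(u,\lambda)\).

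Fix \((u,\lambda)\in\overline{\cN_K}\) and write \(j=\cI(u,\lambda)\), so \(\lambda=\mu_j(u)\). By the simplicity hypothesis of Theorem~\ref{thm:spectral-index}, the ordered eigenvalues \(\mu_i(u)\) are pairwise distinct. They increase strictly and satisfy \(\mu_i(u)\to\infty\), since \(\alpha_i(u)\to0\). Set
\[
\delta=\tfrac13\min\{\mu_{j+1}(u)-\mu_j(u),\ \mu_j(u)-\mu_{j-1}(u)\}>0,
\]
omitting the second term when \(j=0\).

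By Lemma~\ref{lem:eigenvalue-continuity}, there is a neighbourhood \(U\) of \(u\) in \((H^1(M))^K\) such that \(|\mu_i(v)-\mu_i(u)|<\delta\) for \(i=j-1,j,j+1\) and all \(v\in U\). Take the neighbourhood \(W=\{(v,\nu)\in\overline{\cN_K}:v\in U,\ |\nu-\lambda|<\delta\}\). Let \((v,\nu)\in W\) with \(\nu=\mu_k(v)\), where \(k=\cI(v,\nu)\).

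We must show \(k=j\), using monotonicity of \(i\mapsto\mu_i(v)\):
\begin{itemize}
\item If \(k\geq j+1\), then \(\nu\geq\mu_{j+1}(v)>\mu_{j+1}(u)-\delta\geq\lambda+2\delta\). This contradicts \(|\nu-\lambda|<\delta\).
\item If \(k\leq j-1\), then \(\nu\leq\mu_{j-1}(v)<\mu_{j-1}(u)+\delta\leq\lambda-2\delta\). This is again a contradiction.
\end{itemize}
Hence \(k=j\), and \(\cI\) is constant on \(W\).

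The only real point is that \(\cI\) is well defined at each point of \(W\), which the paper already justifies. Lemma~\ref{lem:index-trivial} handles the case \(v=0\), and simplicity handles \(v\neq0\). A further subtlety is that only finitely many indices need to be controlled, namely \(j-1\) and \(j+1\); monotonicity in \(i\) takes care of the rest. So I expect no serious obstacle beyond this bookkeeping.
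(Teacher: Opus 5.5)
Your proof is correct and is essentially the paper's argument: both use the gaps \(\mu_{j-1}(u)<\mu_j(u)<\mu_{j+1}(u)\), continuity of \(\mu_{j\pm1}\) from Lemma~\ref{lem:eigenvalue-continuity}, and monotonicity in the index to force the index of nearby points to equal \(j\). The only difference is that the paper argues with sequences \((u_n,\mu_{j_n}(u_n))\to(u,\mu_j(u))\), while you use an explicit \(\delta\)-neighbourhood; your control of \(\mu_j(v)\) itself is never used and can be dropped.
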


\begin{proof}
Let \((u_n,\mu_{j_n}(u_n))\to(u,\mu_j(u))\) in \(\overline{\cN_K}\). We will show that \(\cI(u_n,\mu_{j_n}(u_n))=j_n\to j=\cI(u,\mu_j(u))\). It suffices to prove that \(j_n=j\) for all sufficiently large \(n\).

By the ordering and simplicity of the eigenvalues, \(\mu_j(u)<\mu_{j+1}(u)\). Since \(\mu_{j_n}(u_n)\to\mu_j(u)\) and, by Lemma~\ref{lem:eigenvalue-continuity}, \(\mu_{j+1}(u_n)\to\mu_{j+1}(u)\), we obtain \(\mu_{j_n}(u_n)<\mu_{j+1}(u_n)\) for all sufficiently large \(n\). The ordering of the eigenvalues therefore gives \(j_n\leq j\).

If \(j\geq1\), the same argument applied to \(\mu_{j-1}(u)<\mu_j(u)\) gives \(\mu_{j-1}(u_n)<\mu_{j_n}(u_n)\) for all sufficiently large \(n\), hence \(j_n\geq j\). For \(j=0\), this lower bound holds because \(j_n\in\mathbb N_0\). Thus \(j_n=j\) for all sufficiently large \(n\), proving the continuity of \(\cI\).
\end{proof}

Since \(\mathbb N_0\) is discrete, Lemma~\ref{lem:index-continuity} implies that \(\cI\) is locally constant. Moreover, the continuous image of a connected set is connected, whereas the only connected subsets of \(\mathbb N_0\) are singletons. Hence \(\cI\) is constant on every connected subset of \(\overline{\cN_K}\). We can now prove Theorem~\ref{thm:spectral-index}.

\begin{proof}[Proof of Theorem~\ref{thm:spectral-index}]
By Lemma~\ref{lem:index-trivial}, every point of \(\overline{\cN_K}\cap\cT_K\) has the form \((0,\mu_j)\) for some \(j\geq0\). Since \(\cI\) is constant on connected components and \(\cI(0,\mu_j)=j\), a component containing both \((0,\mu_j)\) and \((0,\mu_k)\) must satisfy \(j=k\). Thus the two points coincide.
\end{proof}

\subsection{Cohomogeneity one actions}\label{subsec:cohom1}

We now verify the simplicity assumption of Theorem~\ref{thm:spectral-index} for cohomogeneity one actions with interval orbit space.

For \(x\in M\), denote its isotropy subgroup by \(K_x=\{g\in K:gx=x\}\) and its orbit by \(K(x)=\{gx:g\in K\}\). By the Principal Orbit Theorem \cite[Theorem~3.82]{AlexandrinoBettiol2015}, there exists a subgroup \(H\subset K\), unique up to conjugation, such that
\(
M_{\mathrm{princ}}=\{x\in M:K_x\text{ is conjugate to }H\}
\)
is open and dense in \(M\). Its orbits are called principal, and their codimension in \(M\) is the cohomogeneity of the action.

Throughout this subsection, we assume that the action has cohomogeneity one. Since \(M\) is compact and connected, \(M/K\) is either a circle or a closed interval; see \cite[Section~6.3]{AlexandrinoBettiol2015}. We restrict to the interval case: the endpoints correspond to the two nonprincipal orbits, and interior points correspond to principal orbits. The space \((L^2(M))^K\) is isomorphic to \(L^2(M/K)\) and is therefore infinite-dimensional.

Let \(O_-\) and \(O_+\) be the two nonprincipal orbits and set \(l=d(O_-,O_+)\). Since these orbits are compact, there exists a minimizing geodesic \(\gamma:[0,l]\to M\) joining them, with \(\gamma(0)\in O_-\), \(\gamma(l)\in O_+\), and \(|\gamma'(t)|=1\). This geodesic is orthogonal to every orbit it meets; see the discussion following \cite[Remark~3.79]{AlexandrinoBettiol2015}. Moreover, it meets every \(K\)-orbit exactly once, with \(\gamma(t)\in M_{\mathrm{princ}}\) for \(t\in(0,l)\); see the discussion preceding Proposition~6.41 of \cite{AlexandrinoBettiol2015}.

Thus a \(K\)-invariant function is determined by its values along \(\gamma\). On \(M_{\mathrm{princ}}\), the metric has the form \(dt^2+g_t\), where \(g_t\) is the metric induced on \(K(\gamma(t))\); see \cite[Proposition~6.41]{AlexandrinoBettiol2015}. For \(t\in(0,l)\), let \(\rho(t)\) denote the Riemannian volume of this orbit. Then \(\rho\) is smooth and strictly positive on \((0,l)\).

For \(0<a<b<l\), set \(\Omega_{a,b}=\bigcup_{t\in(a,b)}K(\gamma(t))\). For a \(K\)-invariant function \(f\) on \(M_{\mathrm{princ}}\), write \(\widetilde f=f\circ\gamma|_{(0,l)}\). If \(f\) is defined only on \(\Omega_{a,b}\), the same notation means \(\widetilde f=f\circ\gamma|_{(a,b)}\).

\begin{Lemma}\label{lem:invariant-integral-identities}
For every \(f\in L^1(\Omega_{a,b})^K\),
\begin{equation}\label{eq:orbit-integration}
\int_{\Omega_{a,b}}f dM=\int_a^b\widetilde f(t)\rho(t)dt.
\end{equation}
If \(v,w\in C^\infty(M)^K\), then
\begin{equation}\label{eq:local-h1-reduction}
\int_{\Omega_{a,b}}\bigl(|\nabla v|^2+v^2\bigr)dM=\int_a^b\bigl(|\widetilde v'(t)|^2+|\widetilde v(t)|^2\bigr)\rho(t)dt
\end{equation}
and
\begin{equation}\label{eq:local-gradient-reduction}
\int_{\Omega_{a,b}}\langle\nabla v,\nabla w\rangle dM=\int_a^b\widetilde v'(t)\widetilde w'(t)\rho(t)dt.
\end{equation}
\end{Lemma}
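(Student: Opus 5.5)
The plan is to build everything on a diffeomorphism that straightens the tube \(\Omega_{a,b}\) into a product, and then apply Fubini.

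\textbf{Step 1: the product structure.} Let \(x_0=\gamma(t_0)\) for some fixed \(t_0\in(a,b)\), and let \(O=K(x_0)\). Define
\[
\Phi:(a,b)\times O\to\Omega_{a,b},\qquad \Phi(t,kx_0)=k\gamma(t).
\]
This map is well defined. Indeed, every \(t\in(a,b)\) gives a principal orbit, and the geodesic \(\gamma\) is orthogonal to the orbits. Hence for \(k\in K_{x_0}\) we have \(k\gamma(t)=\gamma(t)\): the isotropy is constant along \(\gamma\) on the principal part (the slice/normal-geodesic property). The map \(\Phi\) is a diffeomorphism, since \(\gamma\) meets each orbit exactly once.

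Under \(\Phi\), the metric takes the form \(dt^2+g_t\) by the cited Proposition~6.41. The Riemannian volume therefore factors as \(dt\otimes d\mathrm{vol}_{g_t}\).

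\textbf{Step 2: the orbit-integration formula \eqref{eq:orbit-integration}.} For a \(K\)-invariant \(f\), the pullback \(f\circ\Phi\) depends only on \(t\), and equals \(\widetilde f(t)\). Fubini's theorem then gives
\[
\int_{\Omega_{a,b}}f\,dM=\int_a^b\widetilde f(t)\,\mathrm{vol}_{g_t}(O)\,dt=\int_a^b\widetilde f\rho\,dt.
\]
This holds first for nonnegative measurable \(f\). It extends to \(L^1\) functions by splitting \(f=f^+-f^-\); both parts are again \(K\)-invariant.

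\textbf{Step 3: the gradient formulas.} Take a smooth \(K\)-invariant \(v\). Its gradient on \(\Omega_{a,b}\) is orthogonal to the orbits, because \(v\) is constant along them. Since the metric is \(dt^2+g_t\) and \(\partial_t\) corresponds to \(\gamma'\) transported by \(K\), this gives
\[
\nabla v=\widetilde v'(t)\,\partial_t,\qquad |\nabla v|^2=|\widetilde v'(t)|^2 .
\]
Here \(\widetilde v\) is smooth because \(\gamma\) is smooth. Similarly,
\[
\langle\nabla v,\nabla w\rangle=\widetilde v'\,\widetilde w' .
\]
These pointwise identities are \(K\)-invariant functions, and on \(\Omega_{a,b}\) they are bounded: \(v\) and \(w\) are smooth on the compact manifold \(M\). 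So they lie in \(L^1(\Omega_{a,b})^K\), and applying \eqref{eq:orbit-integration} to them yields \eqref{eq:local-gradient-reduction}. Applying \eqref{eq:orbit-integration} to \(|\nabla v|^2+v^2\) yields \eqref{eq:local-h1-reduction}.

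\textbf{Main obstacle.} The delicate point is Step 1, namely justifying the product structure, including that the coordinate \(t\) is the distance parameter with \(|\nabla t|=1\). I would invoke the cited structure theorem (Proposition~6.41 of \cite{AlexandrinoBettiol2015}) rather than reprove it.
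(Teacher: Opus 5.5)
Your proof is correct and follows essentially the same route as the paper. The decomposition \(dt^2+g_t\) gives \eqref{eq:orbit-integration}; you make this explicit through the product chart \(\Phi\) and Fubini, where the paper states it in one line. The fact that the gradient of an invariant function is normal to the orbits, and hence equals \(\widetilde v'\) times the unit normal, then reduces the other two identities to it.

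One point is worth tightening. Well-definedness of \(\Phi\) uses that \(K_{x_0}\) fixes \(\gamma'(t_0)\), because the slice representation at a principal orbit is trivial; orthogonality alone only gives \(\pm\gamma'(t_0)\). Uniqueness of geodesics then gives \(k\gamma=\gamma\).
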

\begin{proof}
The metric decomposition \(dt^2+g_t\) and the definition of \(\rho\) give \eqref{eq:orbit-integration}.

Let \(v\in C^\infty(M)^K\). Since \(v\) is constant on the orbits, its gradient is normal to them. The normal space to \(K(\gamma(t))\) is spanned by \(\gamma'(t)\), so \(\nabla v(\gamma(t))=a(t)\gamma'(t)\) for some function \(a:(0,l)\to\bR\). Taking the inner product with \(\gamma'(t)\) and using \(|\gamma'(t)|=1\), we obtain
\(
a(t)=\langle\nabla v(\gamma(t)),\gamma'(t)\rangle=\widetilde v'(t),
\)
where the last equality follows from the chain rule. Thus \(\nabla v(\gamma(t))=\widetilde v'(t)\gamma'(t)\), and consequently \(|\nabla v(\gamma(t))|^2=|\widetilde v'(t)|^2\).   Applying the same argument to \(w\in C^\infty(M)^K\), we also obtain \(\langle\nabla v(\gamma(t)),\nabla w(\gamma(t))\rangle=\widetilde v'(t)\widetilde w'(t)\).

Formula \eqref{eq:orbit-integration}, applied to \(|\nabla v|^2+v^2\) and to \(\langle\nabla v,\nabla w\rangle\), now gives \eqref{eq:local-h1-reduction} and \eqref{eq:local-gradient-reduction}, respectively.
\end{proof}

The integral identity \eqref{eq:local-h1-reduction}, together with smooth approximation, gives the following reduction.

\begin{Lemma}\label{lem:invariant-h1-reduction}
If \(v\in(H^1(M))^K\), then \(\widetilde v\in H^1_{\mathrm{loc}}(0,l)\).
\end{Lemma}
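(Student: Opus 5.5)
Fix \(v\in(H^1(M))^K\) and a compact subinterval \([a,b]\subset(0,l)\). The goal is to show that \(\widetilde v|_{(a,b)}\in H^1(a,b)\). Since \(\rho\) is smooth and strictly positive on \((0,l)\), there are constants \(0<m_{a,b}\leq M_{a,b}\) with \(m_{a,b}\leq\rho\leq M_{a,b}\) on \([a,b]\). On \((a,b)\) the weighted space \(H^1((a,b),\rho\,dt)\) therefore coincides with the unweighted \(H^1(a,b)\), with equivalent norms. The plan is to approximate \(v\) by smooth invariant functions, transfer their convergence to the interval through \eqref{eq:local-h1-reduction}, and identify the limit with \(\widetilde v\).

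First, I would produce smooth \(K\)-invariant approximants. Choose \(v_k\in C^\infty(M)\) with \(v_k\to v\) in \(H^1(M)\), which is possible by density on the closed manifold \(M\). Then average over \(K\) with respect to normalized Haar measure: \(Pv_k=\int_K k v_k\,dk\). Because the action on \(H^1(M)\) is orthogonal, \(P\) is the orthogonal projection onto \((H^1(M))^K\), and in particular \(\|P\|\le 1\). Differentiation under the integral shows that \(Pv_k\) is smooth and \(K\)-invariant. Since \(Pv=v\), we get \(Pv_k\to v\) in \(H^1(M)\). Replace \(v_k\) by \(Pv_k\), so that \(v_k\in C^\infty(M)^K\).

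Second, I would pass the convergence to \((a,b)\). Apply \eqref{eq:local-h1-reduction} to the smooth invariant differences \(v_k-v_j\):
\[
\int_a^b\bigl(|\widetilde v_k'-\widetilde v_j'|^2+|\widetilde v_k-\widetilde v_j|^2\bigr)\rho\,dt\le\|v_k-v_j\|_{H^1(M)}^2 .
\]
Together with \(\rho\geq m_{a,b}\), this shows that \((\widetilde v_k)\) is Cauchy in \(H^1(a,b)\). It therefore converges there to some limit \(g\).

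Third, I would identify \(g\) with \(\widetilde v\). Apply \eqref{eq:orbit-integration} to \(|v_k-v|^2\in L^1(\Omega_{a,b})^K\):
\[
\int_a^b|\widetilde v_k-\widetilde v|^2\rho\,dt=\int_{\Omega_{a,b}}|v_k-v|^2\,dM\to0 .
\]
Hence \(\widetilde v_k\to\widetilde v\) in \(L^2(a,b)\), and so \(g=\widetilde v\) almost everywhere. Since \(a,b\) were arbitrary, this gives \(\widetilde v\in H^1_{\mathrm{loc}}(0,l)\).

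The main obstacle is the measure-theoretic meaning of \(\widetilde v=v\circ\gamma\) for an element \(v\) that is only defined almost everywhere on \(M\). I would settle it by taking \(\widetilde v\) to be defined through the identification \((L^2(M))^K\cong L^2(M/K)\) noted above. By \eqref{eq:orbit-integration}, an invariant function vanishing almost everywhere on \(\Omega_{a,b}\) restricts to a function vanishing almost everywhere on \((a,b)\), and conversely. Consequently \(\widetilde v\) is well defined as an element of \(L^2_{\mathrm{loc}}(0,l)\), independent of the representative of \(v\), which is what the third step requires.
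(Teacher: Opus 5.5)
Your proof is correct and follows essentially the paper's route: smooth \(K\)-invariant approximants, the Cauchy estimate in \(H^1(a,b)\) obtained from \eqref{eq:local-h1-reduction} and the positivity of \(\rho\) on \([a,b]\), and identification of the limit through \eqref{eq:orbit-integration}. There are two differences. First, you justify the invariant approximants by Haar averaging, which the paper takes for granted. Second, you fix an invariant representative of \(v\) in advance via \((L^2(M))^K\cong L^2(M/K)\), whereas the paper defines \(\widetilde v\) as the locally uniform limit of the \(\widetilde v_n\) and then proves that its lift \(\widehat v\) represents \(v\); this avoids presupposing such a representative and directly produces the continuous representative used in the later lemmas.
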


\begin{proof}
Let \(v\in(H^1(M))^K\). Choose \(v_n\in C^\infty(M)^K\) such that \(v_n\to v\) in \(H^1(M)\). For every \(0<a<b<l\), applying \eqref{eq:local-h1-reduction} to \(v_n-v_m\) gives
\[
\|\widetilde v_n-\widetilde v_m\|_{H^1(a,b)}^2\leq\frac{1}{\min_{t\in[a,b]}\rho(t)}\|v_n-v_m\|_{H^1(M)}^2.
\]
Thus \((\widetilde v_n)\) converges in \(H^1(a,b)\). By the continuous embedding of \(H^1(a,b)\) into \(C([a,b])\), it also converges uniformly on \([a,b]\). We may therefore define \(\widetilde v(t)=\lim_{n\to\infty}\widetilde v_n(t)\) for every \(t\in(0,l)\). On each \([a,b]\subset(0,l)\), uniform convergence implies convergence in \(L^2(a,b)\), so uniqueness of the limit shows that \(\widetilde v\) agrees almost everywhere with the \(H^1(a,b)\)-limit. Consequently, \(\widetilde v_n\to\widetilde v\) in \(H^1(a,b)\) for every \(0<a<b<l\), and therefore \(\widetilde v\in H^1_{\mathrm{loc}}(0,l)\).

Define \(\widehat v:M_{\mathrm{princ}}\to\bR\) by \(\widehat v(k\gamma(t))=\widetilde v(t)\). Since \(\gamma\) meets each principal orbit exactly once, this function is well-defined and \(K\)-invariant. To prove that it represents \(v\), we first verify equality almost everywhere on each \(\Omega_{a,b}\). By \eqref{eq:orbit-integration},
\[
\|v_n-\widehat v\|_{L^2(\Omega_{a,b})}^2=\int_a^b|\widetilde v_n(t)-\widetilde v(t)|^2\rho(t)dt\leq\left(\max_{t\in[a,b]}\rho(t)\right)\|\widetilde v_n-\widetilde v\|_{L^2(a,b)}^2\longrightarrow0.
\]
Since \(v_n\to v\) in \(L^2(\Omega_{a,b})\), uniqueness of the limit gives \(\widehat v=v\) almost everywhere on \(\Omega_{a,b}\).

Now let \(E=\{x\in M_{\mathrm{princ}}:\widehat v(x)\neq v(x)\}\). The equality \(\widehat v=v\) almost everywhere on \(\Omega_{a,b}\) implies that \(E\cap\Omega_{a,b}\) has measure zero for every \(0<a<b<l\). Since \(M_{\mathrm{princ}}=\Omega_{l/3,2l/3}\cup\Omega_{l/4,3l/4}\cup\cdots\), we have \(E=(E\cap\Omega_{l/3,2l/3})\cup(E\cap\Omega_{l/4,3l/4})\cup\cdots\). Thus \(E\) has measure zero, being a countable union of sets of measure zero. Hence \(\widehat v\) is a \(K\)-invariant representative of \(v\) on \(M_{\mathrm{princ}}\), with \(\widetilde v\in H^1_{\mathrm{loc}}(0,l)\).
\end{proof}

The function \(\widetilde v\) constructed in the proof is continuous, being a locally uniform limit of continuous functions. We now express the weak eigenvalue equation for a \(K\)-invariant eigenfunction of \(A_u\) in terms of \(\widetilde v\) and use the continuity of \(\widetilde u\) and \(\widetilde v\) to show that the resulting equation holds classically.

\begin{Lemma}\label{lem:invariant-eigenfunction-reduction}
Let \(u\in(H^1(M))^K\) and \(v\in\bV_{A_u}^K(\mu_j(u))\). Then the corresponding continuous function \(\widetilde v\) belongs to \(C^2(0,l)\) and satisfies
\begin{equation}\label{eq:reduced-schrodinger-derivative}
\bigl(\rho(t)\widetilde v'(t)\bigr)'=-\rho(t)\bigl(\mu_j(u)+|\widetilde u(t)|^{p-2}\bigr)\widetilde v(t)
\end{equation}
for every \(t\in(0,l)\).
\end{Lemma}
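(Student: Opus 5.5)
Our plan is to test the weak eigenvalue equation against \(K\)-invariant functions supported in a principal region \(\Omega_{a,b}\), reduce it to a one-dimensional weak equation on \((a,b)\) via Lemma~\ref{lem:invariant-integral-identities}, and then bootstrap regularity using the continuity of \(\widetilde u\) and \(\widetilde v\).

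First, fix \(0<a<b<l\) and \(\varphi\in C_c^\infty(a,b)\). We define \(w\) on \(M\) by \(w(k\gamma(t))=\varphi(t)\) on \(\Omega_{a,b}\) and \(w=0\) elsewhere. Since the metric on \(M_{\mathrm{princ}}\) is \(dt^2+g_t\) and \(t\) is a smooth function there (the distance to \(O_-\) restricted to \(\Omega_{a,b}\)), \(w=\varphi\circ t\) is smooth, \(K\)-invariant and compactly supported in \(M_{\mathrm{princ}}\). Hence \(w\in(H^1(M))^K\) is an admissible test function.

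The eigenvalue relation \(\langle A_uv,w\rangle=\mu_j(u)\int_M vw\,dM\) then reads
\[
\int_M\langle\nabla v,\nabla w\rangle dM=\int_M\bigl(\mu_j(u)+|u|^{p-2}\bigr)vw\,dM.
\]
We reduce both sides to \((a,b)\). For the right side we use \eqref{eq:orbit-integration} together with the representative \(\widehat v\) from Lemma~\ref{lem:invariant-h1-reduction} (and likewise for \(u\)). For the left side, \eqref{eq:local-gradient-reduction} is stated only for smooth invariant functions, so we approximate \(v\) by \(v_n\in C^\infty(M)^K\). Since \(\widetilde v_n\to\widetilde v\) in \(H^1(a,b)\), as shown in the proof of Lemma~\ref{lem:invariant-h1-reduction}, we may pass to the limit (alternatively, compute \(\nabla w=\varphi'(t)\gamma'\) directly). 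This yields
\[
\int_a^b\widetilde v'\varphi'\rho\,dt=\int_a^b\bigl(\mu_j(u)+|\widetilde u|^{p-2}\bigr)\widetilde v\varphi\rho\,dt
\quad\text{for all }\varphi\in C_c^\infty(a,b).
\]

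Next, set \(g=-\rho(\mu_j(u)+|\widetilde u|^{p-2})\widetilde v\). By the remark after Lemma~\ref{lem:invariant-h1-reduction}, both \(\widetilde u\) and \(\widetilde v\) are continuous on \((0,l)\), and \(\rho\) is smooth, so \(g\in C(0,l)\). The identity above says that the distributional derivative of \(\rho\widetilde v'\in L^2_{\mathrm{loc}}\) equals \(g\). Fixing \(t_0\in(0,l)\) and setting \(G(t)=\int_{t_0}^tg\), we get \(G\in C^1\) and \((\rho\widetilde v'-G)'=0\) in \(\mathcal D'(a,b)\). By the du Bois-Reymond lemma, \(\rho\widetilde v'=G+C\) almost everywhere. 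Since \(\rho>0\) is smooth, \(\widetilde v'\) has a continuous representative \((G+C)/\rho\), so \(\widetilde v\in C^1\) with this derivative, because \(\widetilde v\) is continuous and absolutely continuous. Consequently \(\rho\widetilde v'=G+C\in C^1\) holds classically, and \(\widetilde v'=(G+C)/\rho\in C^1\), giving \(\widetilde v\in C^2(a,b)\) and \eqref{eq:reduced-schrodinger-derivative} pointwise. Because \(a,b\) are arbitrary, the conclusion holds on \((0,l)\).

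The main obstacle is the first step: justifying that \(w=\varphi\circ t\) is a smooth \(K\)-invariant test function, and that the gradient identity holds for nonsmooth \(v\). The integrand \(|u|^{p-2}vw\) is integrable by Hölder's inequality and the \(L^p\) embedding, so it causes no difficulty. Continuity of \(\widetilde u\) is essential, because it makes \(g\) continuous rather than merely \(L^1_{\mathrm{loc}}\).
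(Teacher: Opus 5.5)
Your proposal is correct and follows the paper's proof essentially step for step. You extend the gradient identity to \(v\in(H^1(M))^K\) by smooth approximation, test the weak eigenvalue equation against \(w(k\gamma(t))=\varphi(t)\) with \(\varphi\in C_c^\infty(a,b)\) to get the one-dimensional weak equation, and use continuity of \(\rho(\mu_j(u)+|\widetilde u|^{p-2})\widetilde v\) to upgrade \(\rho\widetilde v'\) to a \(C^1\) function; your extra details (smoothness of \(w\) via the distance to \(O_-\), and the explicit du Bois-Reymond step) only spell out what the paper leaves implicit.
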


\begin{proof}
Fix \(0<a<b<l\). We first extend \eqref{eq:local-gradient-reduction} to \(v\in(H^1(M))^K\). Let \(v_n\in C^\infty(M)^K\) be such that \(v_n\to v\) in \(H^1(M)\). As shown in the proof of Lemma~\ref{lem:invariant-h1-reduction}, we also have \(\widetilde v_n\to\widetilde v\) in \(H^1(a,b)\). By the Cauchy--Schwarz inequality and the boundedness of \(\rho\) on \([a,b]\), passing to the limit in \eqref{eq:local-gradient-reduction} applied to \(v_n\) and \(w\in C^\infty(M)^K\) gives
\begin{equation}\label{eq:sobolev-gradient-reduction}
\int_{\Omega_{a,b}}\langle\nabla v,\nabla w\rangle dM=\int_a^b\widetilde v'(t)\widetilde w'(t)\rho(t)dt
\end{equation}
for every \(w\in C^\infty(M)^K\).

Now let \(u\in(H^1(M))^K\) and \(v\in\bV_{A_u}^K(\mu_j(u))\). We show that \(\widetilde v\) satisfies \eqref{eq:reduced-schrodinger-derivative} in the sense of distributions on \((a,b)\). Since \(v\) is an eigenfunction of \(A_u\) corresponding to \(\mu_j(u)\), it satisfies
\begin{equation}\label{eq:weak-eigenfunction}
\int_M\langle\nabla v,\nabla w\rangle dM-\int_M|u|^{p-2}vw dM=\mu_j(u)\int_Mvw dM
\end{equation}
for every \(w\in(H^1(M))^K\).

Let \(\widetilde w\in C_c^\infty(a,b)\), and define \(w\in C^\infty(M)^K\) by \(w(k\gamma(t))=\widetilde w(t)\) on \(\Omega_{a,b}\) and \(w=0\) outside \(\Omega_{a,b}\). Both \(w\) and \(\nabla w\) vanish outside \(\Omega_{a,b}\), so all integrals in \eqref{eq:weak-eigenfunction} can be taken over \(\Omega_{a,b}\). Applying \eqref{eq:orbit-integration} and \eqref{eq:sobolev-gradient-reduction} gives
\begin{equation}\label{eq:reduced-schrodinger-weak}
\int_a^b\rho(t)\widetilde v'(t)\widetilde w'(t)dt=\int_a^b\rho(t)\bigl(\mu_j(u)+|\widetilde u(t)|^{p-2}\bigr)\widetilde v(t)\widetilde w(t)dt.
\end{equation}

By Lemma~\ref{lem:invariant-h1-reduction}, \(\widetilde v\in H^1_{\mathrm{loc}}(0,l)\), so \(\rho\widetilde v'\in L^2(a,b)\). Since \eqref{eq:reduced-schrodinger-weak} holds for every \(\widetilde w\in C_c^\infty(a,b)\), the distributional derivative of \(\rho\widetilde v'\) equals \(-\rho(\mu_j(u)+|\widetilde u|^{p-2})\widetilde v\). This function is continuous, so \(\rho\widetilde v'\) has a \(C^1\) representative on \((a,b)\). Since \(\rho\) is smooth and strictly positive, it follows that \(\widetilde v\in C^2(a,b)\), and the equation holds classically there. As \(0<a<b<l\) was arbitrary, \(\widetilde v\in C^2(0,l)\), and \eqref{eq:reduced-schrodinger-derivative} holds for every \(t\in(0,l)\).
\end{proof}

We use this reduction to prove the simplicity of the \(K\)-invariant spectrum of \(A_u\).

\begin{Theorem}\label{thm:schrodinger-simplicity}
Let \(M\) be a compact connected Riemannian manifold and let \(K\) be a compact Lie group acting isometrically on \(M\). Assume that \(M/K\) is homeomorphic to a closed interval and that \((H^1(M))^K\) is continuously embedded in \(L^p(M)\). Then, for every \(u\in(H^1(M))^K\), every \(K\)-invariant eigenvalue of \(A_u=-\Delta-|u|^{p-2}\) is simple.
\end{Theorem}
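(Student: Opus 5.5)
We prove simplicity with a Wronskian argument for the reduced equation of Lemma~\ref{lem:invariant-eigenfunction-reduction}. To rule out a nonzero constant Wronskian we test the weak eigenvalue equation on \(M\) with a cutoff multiplier, so no endpoint regularity at \(O_\pm\) is needed.

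\emph{Step 1: the Wronskian is constant.} Fix \(u\in(H^1(M))^K\) and a \(K\)-invariant eigenvalue \(\mu=\mu_j(u)\), and let \(v,w\in\bV_{A_u}^K(\mu)\). By Lemma~\ref{lem:invariant-eigenfunction-reduction}, \(\widetilde v,\widetilde w\in C^2(0,l)\) both solve \(\bigl(\rho y'\bigr)'=-\rho\bigl(\mu+|\widetilde u|^{p-2}\bigr)y\) classically on \((0,l)\). Set
\[
W(t)=\rho(t)\bigl(\widetilde v'(t)\widetilde w(t)-\widetilde v(t)\widetilde w'(t)\bigr).
\]
Multiplying each equation by the other solution and subtracting gives \(W'\equiv0\). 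Hence \(W\equiv c\) on \((0,l)\).

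\emph{Step 2: the constant vanishes.} Choose \(\eta\in C^\infty(\bR)\) with \(\eta=0\) on \((-\infty,l/3]\) and \(\eta=1\) on \([2l/3,\infty)\). Let \(\phi(k\gamma(t))=\eta(t)\), extended by \(0\) near \(O_-\) and by \(1\) near \(O_+\). Then \(\phi\in C^\infty(M)^K\), and \(\nabla\phi\) is supported in \(\overline{\Omega_{l/3,2l/3}}\subset M_{\mathrm{princ}}\).

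The products \(\phi w\) and \(\phi v\) lie in \((H^1(M))^K\), since \(\phi\) and \(\nabla\phi\) are bounded. We test the weak equation \eqref{eq:weak-eigenfunction} for \(v\) with \(\phi w\) and the one for \(w\) with \(\phi v\), and subtract. The potential and eigenvalue terms cancel by symmetry, and so do the terms \(\phi\langle\nabla v,\nabla w\rangle\). What remains is
\[
\int_M\bigl(w\langle\nabla v,\nabla\phi\rangle-v\langle\nabla w,\nabla\phi\rangle\bigr)dM=0.
\]
The integrand is supported in \(\Omega_{a,b}\) for some \(0<a<l/3<2l/3<b<l\). We reduce this integral to the interval using \eqref{eq:orbit-integration} and the gradient formula \(\nabla v(\gamma(t))=\widetilde v'(t)\gamma'(t)\), which holds a.e. for \(H^1\) functions by the approximation in Lemma~\ref{lem:invariant-h1-reduction}. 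The reduced identity is
\[
\int_a^b\eta'(t)\,W(t)\,dt=c\int_a^b\eta'(t)\,dt=c=0.
\]

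\emph{Step 3: conclusion.} Since \(W\equiv0\), pick \(t_0\in(0,l)\) with \((\widetilde v(t_0),\widetilde v'(t_0))\neq(0,0)\); such a point exists unless \(v=0\). Vanishing of the Wronskian there gives a scalar \(s\) with \((\widetilde w-s\widetilde v)(t_0)=(\widetilde w-s\widetilde v)'(t_0)=0\). The coefficient \(\mu+|\widetilde u|^{p-2}\) is continuous on \((0,l)\) and \(\rho>0\), so uniqueness for this linear second-order ODE gives \(\widetilde w=s\widetilde v\) on \((0,l)\). Because \(\gamma\) meets every principal orbit and \(M_{\mathrm{princ}}\) has full measure, \(w=sv\) a.e. on \(M\). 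Hence \(\dim\bV_{A_u}^K(\mu)\le1\), and the eigenvalue is simple.

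The main technical point is the reduction in Step 2 for nonsmooth \(v,w\). I would handle it by approximating \(v,w\) by \(C^\infty(M)^K\) functions in \(H^1(M)\). Since only the region \(\Omega_{a,b}\), where \(\rho\) is bounded above and below, matters, passage to the limit is justified exactly as for \eqref{eq:sobolev-gradient-reduction}.
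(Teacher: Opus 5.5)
Your proposal is correct and follows the paper's proof essentially step for step: you show \(\rho W\) is constant, then test the two weak eigenvalue equations with a \(K\)-invariant cutoff times the other eigenfunction to force that constant to vanish, and finally conclude proportionality on \((0,l)\) and hence almost everywhere on \(M\). Your ODE-uniqueness argument at a point where \((\widetilde v,\widetilde v')\neq(0,0)\) spells out the passage from \(W\equiv0\) to \(\widetilde w=s\widetilde v\), which the paper states without comment.
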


\begin{proof}
Fix \(u\in(H^1(M))^K\) and let \(v_1,v_2\in\bV_{A_u}^K(\mu_j(u))\setminus\{0\}\). We show that \(v_1\) and \(v_2\) are linearly dependent. On \(M_{\mathrm{princ}}\), we use their continuous \(K\)-invariant representatives given by Lemma~\ref{lem:invariant-h1-reduction}. By Lemma~\ref{lem:invariant-eigenfunction-reduction}, the corresponding functions \(\widetilde v_1,\widetilde v_2\) belong to \(C^2(0,l)\) and satisfy \eqref{eq:reduced-schrodinger-derivative}.
Define their Wronskian by
\(
W(t)=\widetilde v_1(t)\widetilde v_2'(t)-\widetilde v_1'(t)\widetilde v_2(t).
\)
Equation \eqref{eq:reduced-schrodinger-derivative} gives
\(
(\rho W)'=\widetilde v_1(\rho\widetilde v_2')'-\widetilde v_2(\rho\widetilde v_1')'=0.
\)
Consequently, there exists \(C\in\bR\) such that
\(\rho(t)W(t)=C\)
for every \(t\in(0,l)\).

To show that \(C=0\), let \(f\in C^\infty(M)^K\). Then \(fv_1,fv_2\in(H^1(M))^K\). Using \(fv_2\) in the weak equation \(A_uv_1=\mu_j(u)v_1\) and \(fv_1\) in the weak equation \(A_uv_2=\mu_j(u)v_2\), and subtracting, we obtain
\begin{equation}\label{eq:wronskian-weak-identity}
0=\int_M\langle\nabla v_1,\nabla(fv_2)\rangle dM-\int_M\langle\nabla v_2,\nabla(fv_1)\rangle dM.
\end{equation}

Now choose \(f\) such that \(f=1\) near \(O_-\) and \(f=0\) near \(O_+\). By the product rule, the difference of the integrands in \eqref{eq:wronskian-weak-identity} is \(\langle v_2\nabla v_1-v_1\nabla v_2,\nabla f\rangle\), which vanishes near both nonprincipal orbits. By Lemma~\ref{lem:invariant-eigenfunction-reduction}, \(\widetilde v_1,\widetilde v_2\in C^2(0,l)\). Reasoning as in the proof of Lemma~\ref{lem:invariant-integral-identities}, we obtain \(\nabla v_i(\gamma(t))=\widetilde v_i'(t)\gamma'(t)\) for \(i=1,2\).
Thus \eqref{eq:orbit-integration} gives
\[
0=\int_0^l\bigl(\widetilde v_2(t)\widetilde v_1'(t)-\widetilde v_1(t)\widetilde v_2'(t)\bigr)\widetilde f'(t)\rho(t)dt=-\int_0^l\widetilde f'(t)\rho(t)W(t)dt.
\]
Since \(\rho(t)W(t)=C\), while \(\widetilde f=1\) near \(0\) and \(\widetilde f=0\) near \(l\), we obtain
\(
0=-C\int_0^l\widetilde f'(t)dt=C.
\)
Thus \(C=0\), and positivity of \(\rho\) implies that \(W=0\) on \((0,l)\).
Therefore \(\widetilde v_1=c\widetilde v_2\) on \((0,l)\) for some \(c\in\bR\). Since \(v_1\) and \(v_2\) are constant on each orbit, it follows that \(v_1=cv_2\) on \(M_{\mathrm{princ}}\). Its complement consists of the two nonprincipal orbits and has Riemannian measure zero, so \(v_1=cv_2\) almost everywhere on \(M\). Hence~\(\dim\bV_{A_u}^K(\mu_j(u))=1\).
\end{proof}

Theorem~\ref{thm:spectral-index} therefore applies to cohomogeneity one actions whose orbit space is a closed interval. For these actions, the index \(\cI\) also distinguishes solutions of one sign from sign-changing solutions.

\begin{Proposition}\label{prop:sign-changing}
Under the assumptions of Theorem~\ref{thm:schrodinger-simplicity}, let \((u,\mu_j(u))\in\cN_K\). If \(j=0\), then \(u\) is either positive or negative almost everywhere on \(M\). If \(j\geq1\), then \(u\) changes sign on \(M\).

Consequently, if \(\mathcal C\) is a connected component of \(\overline{\cN_K}\) containing \((0,\mu_j)\), then every nontrivial solution in \(\mathcal C\) has one sign when \(j=0\), and changes sign when \(j\geq1\).
\end{Proposition}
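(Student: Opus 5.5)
The plan is to characterise the ground state \(\mu_0(u)\) of \(A_u\) by a variational argument and compare it with \(\lambda=\mu_j(u)\). The first assertion (\(j=0\)) and the second (\(j\geq1\)) will be derived separately. The statement about components then follows from Lemma~\ref{lem:index-continuity}, since \(\cI\) is constant on a component.

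\emph{Case \(j=0\).} First I will show that \(\mu_0(u)\) is the minimum of the Rayleigh quotient \(\langle A_uv,v\rangle/\|v\|_{L^2(M)}^2\) over \((H^1(M))^K\setminus\{0\}\). This follows from the max--min characterisation for the compact resolvent \((A_u+c)^{-1}\) together with \eqref{eq:Au-eigenvalues}. Since \(|v|\in(H^1(M))^K\) and \(|\nabla|v||=|\nabla v|\) almost everywhere, the quotient of \(|v|\) equals that of \(v\). Hence if \(v\) spans \(\bV^K_{A_u}(\mu_0(u))\), which is one-dimensional by Theorem~\ref{thm:schrodinger-simplicity}, then \(|v|\) is also a minimiser and so is an eigenfunction. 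By simplicity, \(|v|=\pm v\), so \(v\) has one sign.

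To upgrade this to strict sign almost everywhere, I will use the ODE from Lemma~\ref{lem:invariant-eigenfunction-reduction}. Suppose \(\widetilde v\geq0\) and \(\widetilde v(t_0)=0\) for some \(t_0\in(0,l)\). Then \(t_0\) is an interior minimum, so \(\widetilde v'(t_0)=0\). Uniqueness for the linear second-order ODE \eqref{eq:reduced-schrodinger-derivative}, whose coefficients are continuous with \(\rho>0\), then gives \(\widetilde v\equiv0\), which is a contradiction. Thus \(\widetilde v>0\) on \((0,l)\), and \(v>0\) on \(M_{\mathrm{princ}}\), which has full measure. Since \(u\) itself spans the eigenspace, \(u\) is positive or negative almost everywhere.

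\emph{Case \(j\geq1\).} Let \(\phi\) span \(\bV^K_{A_u}(\mu_0(u))\), normalised so that \(\phi>0\) almost everywhere by the previous step. Eigenfunctions of the self-adjoint resolvent for distinct eigenvalues are \(L^2\)-orthogonal. Simplicity gives \(\mu_j(u)>\mu_0(u)\), so \(\int_Mu\phi\,dM=0\). If \(u\) had one sign, say \(u\geq0\) almost everywhere, this integral would force \(u=0\) almost everywhere, contradicting \(u\neq0\). Hence \(u\) changes sign.

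For the consequence, take a component \(\mathcal C\ni(0,\mu_j)\). We have \(\cI(0,\mu_j)=j\), and \(\cI\) is constant on \(\mathcal C\) by Lemma~\ref{lem:index-continuity} and connectedness. So every \((u,\lambda)\in\mathcal C\cap\cN_K\) satisfies \(\lambda=\mu_j(u)\) with index \(j\), and the first part applies.

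The main obstacle is the variational characterisation of \(\mu_0(u)\) together with the fact that \(|v|\) stays in \((H^1(M))^K\) with the same quotient, given that the potential \(|u|^{p-2}\) is only in \(L^{p/(p-2)}\). I would handle this directly. The Rayleigh quotient of \(A_u\) is related to that of \((A_u+c)^{-1}\) through the coercive form \(B_u\): \(\alpha_0(u)=\sup_f\langle (A_u+c)^{-1}f,f\rangle/\|f\|^2\) corresponds to \(\inf_v B_u(v,v)/\|v\|_{L^2}^2=\mu_0(u)+c\). The invariance of \(|v|\) under \(K\) is immediate, and the standard lattice property of \(H^1\) gives \(|v|\in H^1(M)\).
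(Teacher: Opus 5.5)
Your proof is correct and follows essentially the same route as the paper: \(|v|\) is a minimiser in the variational characterisation of \(\mu_0(u)\), strict positivity comes from ODE uniqueness at an interior zero of \(\widetilde v\), the case \(j\geq1\) uses \(L^2\)-orthogonality to the positive ground state, and the statement about components uses constancy of \(\cI\). The differences are cosmetic: you invoke simplicity to get \(|v|=\pm v\), where the paper simply replaces \(v_0\) by \(|v_0|\). You should also state explicitly, as the paper does, that continuity of \(\widetilde v\) together with \(v\geq0\) almost everywhere gives \(\widetilde v\geq0\) pointwise on \((0,l)\), before you use the interior minimum.
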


\begin{proof}
Fix \((u,\mu_j(u))\in\cN_K\). Then \(u\) is a nonzero eigenfunction of \(A_u\) corresponding to \(\mu_j(u)\). To determine its sign, we first construct an eigenfunction corresponding to \(\mu_0(u)\) that is positive almost everywhere on \(M\).

By the standard variational characterization,
\begin{equation}\label{eq:first-eigenvalue-characterization}
\mu_0(u)=\min_{v\in(H^1(M))^K\setminus\{0\}}\frac{\int_M\left(|\nabla v|^2-|u|^{p-2}v^2\right)dM}{\int_Mv^2dM},
\end{equation}
and the minimizers are precisely the nonzero eigenfunctions corresponding to \(\mu_0(u)\). Choose such an eigenfunction \(v_0\). By \cite[Proposition~3.49]{Aubin1982}, we have \(|v_0|\in H^1(M)\) and \(\left|\nabla|v_0|\right|=|\nabla v_0|\) almost everywhere. Moreover, \(|v_0|\) is \(K\)-invariant. Thus \(|v_0|\) attains the same minimum in \eqref{eq:first-eigenvalue-characterization} and also belongs to \(\bV_{A_u}^K(\mu_0(u))\). Replacing \(v_0\) by \(|v_0|\), we may assume that \(v_0\geq0\) almost everywhere.

We next show that \(v_0>0\) almost everywhere. By Lemma~\ref{lem:invariant-eigenfunction-reduction}, \(\widetilde v_0\in C^2(0,l)\) satisfies \eqref{eq:reduced-schrodinger-derivative} with eigenvalue \(\mu_0(u)\). Its continuity and the inequality \(v_0\geq0\) almost everywhere imply that \(\widetilde v_0\geq0\) on \((0,l)\). If \(\widetilde v_0(t_0)=0\) for some \(t_0\in(0,l)\), then \(t_0\) is a minimum, so \(\widetilde v_0'(t_0)=0\). Uniqueness for the homogeneous linear equation \eqref{eq:reduced-schrodinger-derivative} with initial conditions \(\widetilde v_0(t_0)=\widetilde v_0'(t_0)=0\) implies that \(\widetilde v_0\equiv0\) on \((0,l)\). The corresponding representative of \(v_0\) would therefore vanish on \(M_{\mathrm{princ}}\). As its complement has measure zero, this contradicts \(v_0\neq0\) in \(H^1(M)\). Hence \(\widetilde v_0>0\) on \((0,l)\), which gives \(v_0>0\) almost everywhere on \(M\).

If \(j=0\), then \(u\) and \(v_0\) correspond to the same eigenvalue \(\mu_0(u)\). By Theorem~\ref{thm:schrodinger-simplicity}, \(u\) is a nonzero multiple of \(v_0\), so \(u\) is either positive or negative almost everywhere.

If \(j\geq1\), then \(\mu_j(u)>\mu_0(u)\). Using \(v_0\) in the weak eigenvalue equation for \(u\), and \(u\) in that for \(v_0\), gives
\[
\mu_j(u)\int_Muv_0 dM
=\int_M\bigl(\langle\nabla u,\nabla v_0\rangle-|u|^{p-2}uv_0\bigr)dM
=\mu_0(u)\int_Muv_0 dM.
\]
Thus \(\int_Muv_0 dM=0\). Since \(u\neq0\) and \(v_0>0\) almost everywhere, this equality excludes both \(u\geq0\) and \(u\leq0\) almost everywhere. Hence \(u\) changes sign.

Finally, let \(\mathcal C\) be a connected component of \(\overline{\cN_K}\) containing \((0,\mu_j)\). By Lemma~\ref{lem:index-continuity}, \(\cI\) is constant on \(\mathcal C\), with value \(\cI(0,\mu_j)=j\). Consequently, every nontrivial solution in \(\mathcal C\) has the form \((w,\mu_j(w))\). Applying the result just proved to \(w\), we conclude that it is either positive or negative almost everywhere if \(j=0\), and changes sign if \(j\geq1\).
\end{proof}

\section{Global bifurcation on selected manifolds}\label{sec:selman}

Throughout this section, \(M\) is a closed connected \(n\)-dimensional Riemannian manifold, \(p>2\), and \(G\) is a compact Lie group acting isometrically on \(M\). For a closed subgroup \(K\subset G\), we give sufficient conditions for unbounded continua of \(K\)-invariant weak solutions of \eqref{eq:main} to bifurcate from the trivial branch. We then apply these conditions to homogeneous spaces and distinguish continua of solutions invariant under different subgroups of \(G\).

\subsection{Unbounded continua}\label{subsec:unbounded-continua}

By Lemma~\ref{lem:index-trivial}, every bifurcation point on \(\cT_K\) has the form \((0,\mu_j)\) for some \(\mu_j\in\sigma_K(-\Delta)\). For each such point, denote by \(\cC_K(\mu_j)\) the connected component of \(\overline{\cN_K}\) containing it. The bifurcation is called global if \(\cC_K(\mu_j)\) is unbounded in \((H^1(M))^K\times\bR\) or meets \(\cT_K\) at another point.

We first recall the improved compactness of Sobolev embeddings in the presence of symmetries. Set
\begin{equation}\label{eq:dK}
d_K=\min\{\dim K(x):x\in M\}.
\end{equation}
Since \(K\) is compact, every \(K\)-orbit is infinite if and only if \(d_K\geq1\). In this case, the following result is due to Hebey and Vaugon; see \cite[Corollary~1]{HebVau}. For \(d_K=0\), it follows from the usual Rellich--Kondrachov theorem restricted to \((H^1(M))^K\).

\begin{Theorem}\label{thm:HV}
If \(n-d_K\leq2\), then \((H^1(M))^K\) is compactly embedded in \(L^q(M)\) for every \(1<q<\infty\). If \(n-d_K>2\), then this embedding is compact for every \(1<q<2(n-d_K)/(n-d_K-2)\).
\end{Theorem}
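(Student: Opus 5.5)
The case \(d_K\geq1\) is exactly Hebey--Vaugon \cite[Corollary~1]{HebVau}, so I would quote it and devote the argument to the case \(d_K=0\), where it must follow from the classical Rellich--Kondrachov theorem. There \(n-d_K=n\). If \(n\leq2\), the Sobolev embedding \(H^1(M)\hookrightarrow L^q(M)\) is compact for every \(1\leq q<\infty\) on a closed manifold. For \(n=1\) this holds because \(H^1\hookrightarrow C^0\) compactly. For \(n=2\) one first uses \(H^1\hookrightarrow L^r\) for all \(r<\infty\). Then one interpolates between the compact embedding into \(L^1\) or \(L^2\) and boundedness into \(L^r\) with \(r>q\). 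If \(n>2\), Rellich--Kondrachov gives compactness of \(H^1(M)\hookrightarrow L^q(M)\) for \(1\leq q<2n/(n-2)\).

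Next I would note that \((H^1(M))^K\) is a closed subspace of \(H^1(M)\). It is the fixed-point set of the continuous linear maps \(u\mapsto ku\), intersected over \(k\in K\). The restriction of a compact operator to a closed subspace is compact, since a bounded sequence in the subspace is bounded in \(H^1(M)\) and hence has a subsequence converging in \(L^q(M)\). This yields the statement for \(d_K=0\).

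For \(d_K\geq1\), I would check that the hypotheses of Hebey--Vaugon match our setting. \(M\) is compact and \(K\) is a compact group of isometries, so the minimal orbit dimension is \(d_K\). Their critical exponent is \(2(n-d_K)/(n-d_K-2)\) when \(n-d_K>2\), and every finite exponent is allowed when \(n-d_K\leq2\).

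The only genuine content is that result. I do not expect any obstacle beyond translating their notation, namely their \(H_1^2\) and the subcritical range expressed through \(k=d_K\).
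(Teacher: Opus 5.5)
Your proposal is correct and follows the paper's argument: it quotes Hebey--Vaugon's Corollary~1 for \(d_K\geq1\) and uses the classical Rellich--Kondrachov theorem restricted to the closed subspace \((H^1(M))^K\) for \(d_K=0\). Your extra details, namely the closedness of the fixed-point subspace, the compactness of the restricted embedding, and the interpolation argument for \(n=2\), only make explicit what the paper leaves implicit.
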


We combine the compactness in Theorem~\ref{thm:HV} with the Rabinowitz global bifurcation theorem from \cite{Rabinowitz1971}. The separation result in Theorem~\ref{thm:spectral-index} excludes a second intersection with the trivial branch, yielding the following unboundedness result.

\begin{Theorem}\label{thm:unbounded-continuum}
Assume that \(M/K\) is homeomorphic to a closed interval and, if \(n-d_K>2\), that \(p<2(n-d_K)/(n-d_K-2)\). Then, for every \(\mu_j\in\sigma_K(-\Delta)\), \(j\geq0\), the point \((0,\mu_j)\in\cT_K\) is a global bifurcation point. The component \(\cC_K(\mu_j)\) is unbounded in \((H^1(M))^K\times\bR\) and meets \(\cT_K\) only at \((0,\mu_j)\). These components are pairwise disjoint. Moreover, every nontrivial solution in \(\cC_K(\mu_j)\) is either positive or negative almost everywhere on \(M\) if \(j=0\), and changes sign if \(j\geq1\).
\end{Theorem}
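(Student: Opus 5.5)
The plan is to rewrite the problem in the Rabinowitz framework on the Hilbert space \(E=(H^1(M))^K\), check that each \(\mu_j\) has odd multiplicity, and then let the separation results of Section~\ref{sec:connected-components} remove the bounded alternative.

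First, the hypothesis on \(p\) and Theorem~\ref{thm:HV} give that \(E\) embeds compactly in \(L^p(M)\). In particular it embeds continuously, so Theorem~\ref{thm:schrodinger-simplicity}, Theorem~\ref{thm:spectral-index} and Proposition~\ref{prop:sign-changing} all apply.

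Next, equip \(E\) with the inner product \(\langle v,w\rangle_E=\int_M(\langle\nabla v,\nabla w\rangle+vw)\,dM\). By the Riesz theorem, define operators on \(E\) through
\[
\langle Lu,w\rangle_E=\int_M uw\,dM
\]
and
\[
\langle N(u),w\rangle_E=\int_M|u|^{p-2}uw\,dM .
\]
A pair \((u,\lambda)\) is a \(K\)-invariant weak solution precisely when
\[
u=(1+\lambda)Lu+N(u).
\]
This is the form \(u=\nu Lu+N(u)\) required by Rabinowitz, with \(\nu=1+\lambda\).

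We then check the hypotheses of \cite{Rabinowitz1971}.
\begin{itemize}
\item \(L\) is compact by Rellich--Kondrachov.
\item \(N\) is compact and continuous. If \(u_n\rightharpoonup u\) in \(E\), then \(u_n\to u\) in \(L^p(M)\) by the compact embedding. Hence \(|u_n|^{p-2}u_n\to|u|^{p-2}u\) in \(L^{p'}(M)\), and the estimate \(\|N(u)\|_E\le C\||u|^{p-1}\|_{L^{p'}}\) gives \(N(u_n)\to N(u)\).
\item \(N(u)=o(\|u\|_E)\), since \(\|N(u)\|_E\le C\|u\|_{L^p}^{p-1}\le C'\|u\|_E^{p-1}\) and \(p>2\).
\end{itemize}
The characteristic values of \(L\) are the \(\nu\) with \(v=\nu Lv\) for some \(v\ne0\). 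These are exactly \(\nu=1+\mu_j\) with \(\mu_j\in\sigma_K(-\Delta)\). By Theorem~\ref{thm:schrodinger-simplicity} applied with \(u=0\), each of them has multiplicity one, which is odd.

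Rabinowitz's theorem then shows that the component \(\cC_K(\mu_j)\) of \(\overline{\cN_K}\) containing \((0,\mu_j)\) is either unbounded or meets \(\cT_K\) at some \((0,\mu_k)\) with \(k\ne j\). In either case \((0,\mu_j)\) is a global bifurcation point. Theorem~\ref{thm:spectral-index}, whose hypothesis holds by Theorem~\ref{thm:schrodinger-simplicity}, rules out the second alternative. So \(\cC_K(\mu_j)\) is unbounded and meets \(\cT_K\) only at \((0,\mu_j)\).

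For the remaining claims, suppose \(\cC_K(\mu_j)\) and \(\cC_K(\mu_k)\) with \(j\ne k\) intersect. Being components, they would then coincide, so a single component would contain two distinct points of \(\cT_K\), which is impossible. Hence these components are pairwise disjoint. The sign statements are exactly Proposition~\ref{prop:sign-changing}.

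The main obstacle I expect is matching conventions with Rabinowitz's setting. His theorem is stated for the closure of the set of nontrivial solutions, and we need it to agree with \(\overline{\cN_K}\); the two sets agree once we add the bifurcation point \((0,\mu_j)\) itself. The parameter shift \(\nu=1+\lambda\) also has to be carried through. The one analytic point needing care is the compactness of \(N\), which is precisely where the restriction on \(p\) enters through Theorem~\ref{thm:HV}.
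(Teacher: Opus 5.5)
Your proposal is correct and follows the same route as the paper. You get compactness from Theorem~\ref{thm:HV} and simplicity, hence odd multiplicity, from Theorem~\ref{thm:schrodinger-simplicity}; the Rabinowitz alternative then applies, Theorem~\ref{thm:spectral-index} excludes a second trivial point, and Proposition~\ref{prop:sign-changing} gives the signs. Your reformulation \(u=(1+\lambda)Lu+N(u)\) on \((H^1(M))^K\) and your check of Rabinowitz's hypotheses only make explicit details the paper leaves implicit; since \(L\) is symmetric there, its algebraic and geometric multiplicities coincide.
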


\begin{proof}
By Theorem~\ref{thm:HV}, the embedding of \((H^1(M))^K\) into \(L^p(M)\) is compact and therefore continuous. Theorem~\ref{thm:schrodinger-simplicity} implies that \(A_u\) has simple \(K\)-invariant spectrum for every \(u\in(H^1(M))^K\). In particular, every \(\mu_j\in\sigma_K(-\Delta)\) is simple. The Rabinowitz global bifurcation theorem from \cite{Rabinowitz1971} therefore shows that \((0,\mu_j)\in\cT_K\) is a global bifurcation point: its component \(\cC_K(\mu_j)\) is either unbounded or meets \(\cT_K\) at another point.

By Theorem~\ref{thm:spectral-index}, the only point of \(\cT_K\) in \(\cC_K(\mu_j)\) is \((0,\mu_j)\), so the component is unbounded. Components corresponding to different eigenvalues are therefore distinct and hence disjoint. Finally, Proposition~\ref{prop:sign-changing} gives the stated conclusions about the signs of their nontrivial solutions.
\end{proof}

\begin{Remark}
The two possibilities in the Rabinowitz alternative are not mutually exclusive: an unbounded component may also meet the trivial branch at another eigenvalue. Theorem~\ref{thm:unbounded-continuum} excludes this possibility, since \(\cC_K(\mu_j)\cap\cT_K=\{(0,\mu_j)\}\).
\end{Remark}
We next verify the geometric assumptions of Theorem~\ref{thm:unbounded-continuum} for actions on compact homogeneous spaces, beginning with the condition \(d_K>0\).

Let \(G\) be a compact Lie group, and let \(H,K\in\sub(G)\) be connected. Consider the action of \(K\) on \(G/H\) given by \(k\cdot(gH)=kgH\). For this action, positivity of \(d_K\) can be expressed directly in terms of the subgroups \(H\) and \(K\).

\begin{Lemma}\label{lem:positive-dimensional-orbits}
Let \(G\) be a compact Lie group, and let \(K,H\subset G\) be connected closed subgroups. For the action of \(K\) on \(G/H\), one has \(d_K>0\) if and only if \((G/H)^K=\varnothing\), or equivalently, if no conjugate of \(K\) is contained in \(H\).
\end{Lemma}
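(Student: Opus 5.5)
Since $d_K$ is the minimum of the orbit dimensions, the plan is to reduce everything to the statement that a $K$-orbit has dimension zero exactly when it is a fixed point. So $d_K=0$ if and only if some orbit $K(gH)$ is zero-dimensional. As $K$ is compact, every orbit $K(gH)\cong K/K_{gH}$ is a compact manifold, and it is zero-dimensional precisely when $K_{gH}$ is open in $K$, i.e.\ has finite index. \textbf{The key step} is to use connectedness of $K$: an open subgroup of a connected topological group is also closed, hence equals the whole group. Therefore $\dim K(gH)=0$ if and only if $K_{gH}=K$, that is, $gH\in(G/H)^K$. This gives $d_K>0\iff(G/H)^K=\varnothing$.

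For the second equivalence, compute the isotropy subgroup directly: $k\cdot gH=gH$ if and only if $g^{-1}kg\in H$. Hence
\[
K_{gH}=K\cap gHg^{-1}.
\]
It follows that $gH$ is fixed by $K$ exactly when $K\subset gHg^{-1}$, equivalently $g^{-1}Kg\subset H$. Thus $(G/H)^K\neq\varnothing$ holds if and only if some conjugate of $K$ is contained in $H$, and negating both sides gives the claim.

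There is no real obstacle here; the only point needing care is the connectedness argument. Without connectedness of $K$, a finite-index isotropy subgroup would give finite orbits that are not fixed points, and the first equivalence would fail. Connectedness of $H$ is not needed.
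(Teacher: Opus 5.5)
Your proof is correct and follows essentially the same route as the paper: connectedness of \(K\) forces a zero-dimensional orbit to be a single point, and the fixed-point condition \(kgH=gH\) for all \(k\in K\) is then rewritten as \(g^{-1}Kg\subset H\). The only difference is phrasing: the paper gets the first step from connectedness of the orbit, while you get it from an open isotropy subgroup having to equal \(K\). Your remark that connectedness of \(H\) is never used is also accurate.
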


\begin{proof}
Since \(K\) is connected, every orbit is connected and therefore has dimension zero if and only if it consists of a single point. Thus \(d_K=0\) if and only if the action has a fixed point.
A point \(gH\in G/H\) is fixed by \(K\) if and only if \(kgH=gH\) for every \(k\in K\), or equivalently \(g^{-1}Kg\subset H\). Hence a fixed point exists precisely when some conjugate of \(K\) is contained in \(H\).
\end{proof}

We now assume additionally that \(G\) is connected and semisimple, and show that then a cohomogeneity one action of \(K\) on \(G/H\) has interval orbit space. For this purpose, consider the action of \(K\times H\) on \(G\) given by \((k,h)\cdot g=kgh^{-1}\). The map
\(
G/(K\times H)\longrightarrow(G/H)/K,
\)
defined by \((K\times H)(g)\mapsto K(gH)\), is a homeomorphism. In particular, the two actions have the same cohomogeneity.

\begin{Lemma}\label{lem:orbit-spaces-interval}
Let \(G\) be a compact connected semisimple Lie group, and let \(K,H\subset G\) be connected closed subgroups. Assume that the action of \(K\times H\) on \(G\), given by \((k,h)\cdot g=kgh^{-1}\), has cohomogeneity one. Then \((G/H)/K\) is homeomorphic to \([0,1]\).
\end{Lemma}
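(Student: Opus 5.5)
Via the homeomorphism \(G/(K\times H)\to(G/H)/K\) noted above, it suffices to show that the orbit space \(G/(K\times H)\) is homeomorphic to \([0,1]\) rather than to a circle. The plan is to apply the dichotomy recalled in Subsection~\ref{subsec:cohom1}. Fix a bi-invariant metric on \(G\), which exists because \(G\) is compact. The action \((k,h)\cdot g=kgh^{-1}\) is isometric, and by hypothesis it has cohomogeneity one on the compact connected manifold \(G\). By \cite[Section~6.3]{AlexandrinoBettiol2015}, the orbit space is therefore either a circle or a closed interval, and it remains to exclude the circle.

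To exclude the circle, I would use the standard structure theorem for cohomogeneity one actions. If \(M/K'\) is a circle, then all orbits are principal and the quotient map \(M\to M/K'\cong S^1\) is a fiber bundle. In particular \(M\) fibers over \(S^1\), and the long exact homotopy sequence gives a surjection \(\pi_1(M)\to\pi_1(S^1)=\mathbb Z\). Since the fiber is connected, the map \(\pi_0\) of the fiber to \(\pi_0(M)\) is trivial, so the sequence really ends in a surjection onto \(\mathbb Z\). Hence \(\pi_1(M)\) would be infinite, or at least would admit a surjection onto \(\mathbb Z\); either way its abelianization would be infinite.

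For \(M=G\), however, \(G\) is compact, connected and semisimple, so its universal cover is compact (Weyl's theorem). Therefore \(\pi_1(G)\) is finite and \(H_1(G;\bR)=0\), which rules out any surjection \(\pi_1(G)\to\mathbb Z\). This contradiction excludes the circle case, so \(G/(K\times H)\cong[0,1]\), and hence \((G/H)/K\cong[0,1]\).

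The main obstacle is justifying the bundle statement in the circle case. This is the step where I would rely on \cite[Section~6.3]{AlexandrinoBettiol2015}: when the orbit space is \(S^1\), there are no singular orbits, so the slice theorem makes the quotient a submersion with compact fibers, and Ehresmann's theorem then gives a locally trivial fibration. An alternative route is to use the circle-valued map directly: its pullback of \(d\theta\) is a closed \(1\)-form on \(G\) that is not exact, which contradicts \(H^1_{\mathrm{dR}}(G)=0\) for semisimple \(G\).
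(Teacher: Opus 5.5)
Your proof is correct and follows the paper's argument: you reduce to $G/(K\times H)$, invoke the circle/interval dichotomy, and exclude the circle because $\pi_1(G)\to\pi_1(S^1)\cong\mathbb Z$ would be surjective while $\pi_1(G)$ is finite by Weyl's theorem. The one difference is the justification of surjectivity: you obtain it from the principal-orbit bundle $G\to S^1$ and its homotopy exact sequence, whereas the paper cites \cite[Corollary~II.6.3]{Bredon} for actions with connected orbits. In your version, the fact doing the work is that $\pi_0$ of the connected fiber is a single point, not that the map from it to $\pi_0(G)$ is trivial.
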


\begin{proof}
By the structure theorem for compact cohomogeneity one actions, see \cite[Section~6.3]{AlexandrinoBettiol2015}, the orbit space \(G/(K\times H)\) is homeomorphic either to a circle or to a closed interval. Suppose that it is homeomorphic to \(S^1\), and let \(\pi:G\to S^1\) be the corresponding orbit projection.

Since \(G\) is path connected and every orbit of \(K\times H\) is connected, it follows from \cite[Corollary~II.6.3]{Bredon} that the induced homomorphism \(\pi_*\colon\pi_1(G,g)\to\pi_1(S^1,\pi(g))\) is surjective. However, \(\pi_1(G,g)\) is finite by Weyl's theorem, see \cite[Theorem~4.69]{KnappLie}, whereas \(\pi_1(S^1,\pi(g))\) is isomorphic to \(\mathbb Z\). This is a contradiction. Thus \(G/(K\times H)\), and hence \((G/H)/K\), is homeomorphic to a closed interval.
\end{proof}

Under the assumptions of Lemma~\ref{lem:orbit-spaces-interval}, Theorem~\ref{thm:unbounded-continuum} therefore applies to \(M=G/H\) whenever \(p\) satisfies its exponent assumption. By Lemma~\ref{lem:positive-dimensional-orbits}, the admissible range improves on the usual subcritical range when no conjugate of \(K\) is contained in \(H\).

For symmetric subgroups of a simple group, the condition \(d_K>0\) has a simpler characterization. Recall that an automorphism \(\sigma:G\to G\) is called involutive if \(\sigma^2=\Id\). Denote its fixed-point subgroup by \(G^\sigma=\{g\in G:\sigma(g)=g\}\) and its identity component by \((G^\sigma)_0\). A connected closed subgroup \(H\subset G\) is called symmetric if \(H=(G^\sigma)_0\) for some involutive automorphism \(\sigma\).

We call a connected Lie group simple if its Lie algebra is nonabelian and has no nonzero proper ideals. For such groups, we use the following standard maximality property; see the remark following Definition~1.12 in \cite{Kollross}. For completeness we include its proof in the appendix.

\begin{Lemma}\label{lem:maximal-symmetric-subgroup}
Let \(G\) be a compact connected simple Lie group, and let \(H\subset G\) be a proper connected symmetric subgroup. If \(L\subset G\) is a connected closed subgroup containing \(H\), then either \(L=H\) or \(L=G\).
\end{Lemma}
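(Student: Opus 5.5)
The plan is to pass to Lie algebras and use the Cartan decomposition attached to \(\sigma\). Let \(\fg\), \(\fh\), \(\fl\) be the Lie algebras of \(G\), \(H\), \(L\), and let \(\theta=d\sigma_e\), an involutive automorphism of \(\fg\). Since \(H=(G^\sigma)_0\), its Lie algebra \(\fh\) is the \(+1\)-eigenspace of \(\theta\). Denote the \(-1\)-eigenspace by \(\fp\), so that \(\fg=\fh\oplus\fp\) and
\[
[\fh,\fp]\subset\fp,\qquad [\fp,\fp]\subset\fh .
\]
Because \(L\supset H\), we have \(\fl\supset\fh\), and hence \(\fl=\fh\oplus(\fl\cap\fp)\). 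Set \(\fp'=\fl\cap\fp\). The group \(L\) is connected, so it is determined by \(\fl\). It therefore suffices to show that \(\fp'=0\) or \(\fp'=\fp\).

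The key step is to show that \(\fp'\) and its complement in \(\fp\) generate ideals of \(\fg\). Since \(\fl\) is a subalgebra containing \(\fh\), the subspace \(\fp'\) is \(\mathrm{ad}(\fh)\)-invariant. Fix an \(\mathrm{Ad}(G)\)-invariant inner product on \(\fg\), which exists because \(G\) is compact. With respect to it, \(\theta\) is orthogonal, since \(-B\circ\theta\) is invariant for the Killing form \(B\) and we may simply take \(-B\), as \(\fg\) is semisimple. Let \(\fp''\) be the orthogonal complement of \(\fp'\) in \(\fp\). It is also \(\mathrm{ad}(\fh)\)-invariant. For \(X\in\fp'\), \(Y\in\fp''\) and \(Z\in\fh\), invariance gives
\[
\langle [X,Y],Z\rangle=\langle X,[Y,Z]\rangle=0,
\]
because \([Y,Z]\in\fp''\perp\fp'\). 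Since \([X,Y]\in\fh\), we obtain \([\fp',\fp'']=0\). Now put \(\fa=\fp'+[\fp',\fp']\). It is stable under \(\mathrm{ad}\,\fh\) (via the Jacobi identity and the invariance of \(\fp'\)), under \(\mathrm{ad}\,\fp'\) (since \([\fp',[\fp',\fp']]\subset\fp'\)), and under \(\mathrm{ad}\,\fp''\) (since \([\fp'',\fp']=0\), and by Jacobi \([\fp'',[\fp',\fp']]=0\)). Hence \(\fa\) is an ideal of \(\fg\).

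Simplicity of \(\fg\) now gives \(\fa=0\) or \(\fa=\fg\). If \(\fa=0\), then \(\fp'=0\), so \(\fl=\fh\) and \(L=H\) by connectedness. If \(\fa=\fg\), then \(\fp\subset\fa\), which forces \(\fp=\fp'\), since the \(\fp\)-component of \(\fa\) is \(\fp'\). Thus \(\fl=\fg\), and \(L=G\) because \(G\) is connected.

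The only delicate point is making sure the argument handles the fact that \(L\) need not a priori be \(\theta\)-stable. This is resolved by the observation above that \(\fh\subset\fl\) alone yields \(\fl=\fh\oplus(\fl\cap\fp)\): any \(X\in\fl\) splits as \(X_{\fh}+X_{\fp}\) with \(X_{\fh}\in\fh\subset\fl\), so \(X_{\fp}\in\fl\). The properness of \(H\) is not needed for the dichotomy itself; it only ensures that the two alternatives are distinct.
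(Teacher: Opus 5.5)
Your argument is correct and follows the same outline as the paper's proof. Both use the Cartan decomposition \(\fg=\fh\oplus\fp\), observe that \(\fh\subset\fl\) forces \(\fl=\fh\oplus(\fl\cap\fp)\), and note that \(\fl\cap\fp\) is an \(\mathrm{ad}(\fh)\)-invariant subspace of \(\fp\), so everything reduces to irreducibility of \(\fp\) as an \(\fh\)-module.

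The only difference is in that last step. The paper cites irreducibility from \cite[Theorem~VIII.5.3, case~I]{Helgason}. You prove it directly by showing that \(\fp'+[\fp',\fp']\) is an ideal of \(\fg\), using an invariant inner product and \([\fp',\fp'']=0\), which makes your version self-contained. Only \(\mathrm{ad}\)-invariance and positive definiteness of the inner product are used there, so the remark about \(\theta\) being orthogonal is not needed.
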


Together with Lemma~\ref{lem:positive-dimensional-orbits}, this shows that, for proper connected symmetric subgroups, the condition \(d_K>0\) is equivalent to nonconjugacy.

\begin{Lemma}\label{lem:symmetric-positive-dimensional-orbits}
Let \(G\) be a compact connected simple Lie group, and let \(H,K\subset G\) be proper connected symmetric subgroups. For the \(K\)-action on \(G/H\), one has \(d_K>0\) if and only if \(H\) and \(K\) are not conjugate in \(G\).
\end{Lemma}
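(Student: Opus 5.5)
By Lemma~\ref{lem:positive-dimensional-orbits}, \(d_K=0\) holds if and only if some conjugate \(gKg^{-1}\) is contained in \(H\). The plan is to show that, for proper connected symmetric subgroups of a simple group, such a containment forces \(gKg^{-1}=H\). We then obtain \(d_K=0\) if and only if \(H\) and \(K\) are conjugate, and taking negations gives the statement.

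The first direction is immediate. If \(H=gKg^{-1}\) for some \(g\in G\), then \(gKg^{-1}\subset H\). Lemma~\ref{lem:positive-dimensional-orbits} then gives \(d_K=0\); concretely, the point \(gH\) is fixed by \(K\).

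For the converse, assume \(d_K=0\) and choose \(g\in G\) with \(L:=g^{-1}Kg\subset H\) (in the convention of Lemma~\ref{lem:positive-dimensional-orbits}, the fixed point \(gH\) gives \(g^{-1}Kg\subset H\)). The subgroup \(L\) is again a proper connected symmetric subgroup. Indeed, if \(K=(G^\sigma)_0\), then \(L=(G^{\sigma'})_0\) for the involutive automorphism \(\sigma'=c_{g^{-1}}\circ\sigma\circ c_g\), where \(c_g\) is conjugation. This holds because conjugation maps \(G^\sigma\) onto \(G^{\sigma'}\) and preserves identity components. Now \(H\) is a connected closed subgroup containing \(L\). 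Lemma~\ref{lem:maximal-symmetric-subgroup}, applied to \(L\), gives \(H=L\) or \(H=G\). Since \(H\) is proper, we conclude \(H=g^{-1}Kg\), so \(H\) and \(K\) are conjugate.

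The only point requiring care, and it is minor, is checking that conjugates of symmetric subgroups are symmetric. This is needed because Lemma~\ref{lem:maximal-symmetric-subgroup} is stated for a symmetric subgroup contained in another group, and here it is applied to \(L\) rather than to \(K\) itself.
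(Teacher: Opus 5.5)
Your proposal is correct and follows the paper's proof: you reduce $d_K=0$ to the containment of a conjugate of $K$ in $H$ via Lemma~\ref{lem:positive-dimensional-orbits}, and then apply Lemma~\ref{lem:maximal-symmetric-subgroup} to that conjugate, using properness of $H$ to force equality; you also verify that the conjugate is again a proper connected symmetric subgroup, which the paper takes for granted. One trivial slip: if $H=gKg^{-1}$, the point fixed by $K$ is $g^{-1}H$, not $gH$, but this does not matter because you already obtain $d_K=0$ from Lemma~\ref{lem:positive-dimensional-orbits}.
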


\begin{proof}
By Lemma~\ref{lem:positive-dimensional-orbits}, \(d_K=0\) if and only if \(gKg^{-1}\subset H\) for some \(g\in G\). Since \(gKg^{-1}\) is a proper connected symmetric subgroup and \(H\) is connected and proper, Lemma~\ref{lem:maximal-symmetric-subgroup} shows that this inclusion holds only when \(gKg^{-1}=H\). Thus \(d_K=0\) if and only if \(H\) and \(K\) are conjugate.
\end{proof}

\begin{Remark}
Let \(H,K\subset G\) satisfy the assumptions of Lemma~\ref{lem:symmetric-positive-dimensional-orbits}, and suppose that they are not conjugate. If the \(K\)-action on \(G/H\) has cohomogeneity one, then \(d_K>0\) and, by Lemma~\ref{lem:orbit-spaces-interval}, its orbit space is a closed interval. Thus, when \(n=\dim(G/H)\geq3\), Theorem~\ref{thm:unbounded-continuum} applies at the critical exponent \(p=2n/(n-2)\) and for some supercritical exponents. The admissible supercritical range is determined by \(d_K\), whose exact values for several classical actions are computed in Lemma~\ref{lem:minimal-orbit-dimensions} in the appendix. The cohomogeneity one actions with \(G\) simple and \(H,K\) proper connected symmetric subgroups are classified in \cite[Theorem~B]{Kollross}. This classification therefore provides a collection of actions to which our results apply whenever \(H\) and \(K\) are not conjugate.
\end{Remark}

\subsection{Geometrically distinct bifurcations}

We now give a condition ensuring that continua obtained in different fixed-point spaces are geometrically distinct, even when they bifurcate from the same eigenvalue.

Two solutions \((u,\lambda)\) and \((v,\lambda)\) of \eqref{eq:main} are called \(G\)-geometrically equivalent if \(v=gu\) for some \(g\in G\). Given closed subgroups \(K,L\subset G\), suppose that \((0,\mu)\) is a bifurcation point in both \((H^1(M))^K\times\bR\) and \((H^1(M))^L\times\bR\). We call the corresponding bifurcations \(G\)-geometrically distinct if there exists a neighborhood of \((0,\mu)\) in \(H^1(M)\times\bR\) in which no nontrivial \(K\)-invariant solution is \(G\)-geometrically equivalent to a nontrivial \(L\)-invariant solution.

Assume throughout this subsection that \(G\) acts transitively on \(M\), so that \((H^1(M))^G\) consists of the constant functions. The following argument adapts the fixed-point-space method used in Section~2 and the proof of Theorem~4.8 of \cite{Marzantowicz2019} to compare solutions invariant under different subgroups.

\begin{Proposition}\label{prop:geometric-separation}
Assume that \(G\) acts transitively on \(M\), and let \(K,L\subset G\) be closed subgroups satisfying
\begin{equation}\label{eq:fixed-point-separation}
\forall g\in G\colon (H^1(M))^K\cap(H^1(M))^{gLg^{-1}}=(H^1(M))^G.
\end{equation}
If \(\mu>0\) and \((0,\mu)\) is a bifurcation point in both \((H^1(M))^K\times\bR\) and \((H^1(M))^L\times\bR\), then the corresponding bifurcations are \(G\)-geometrically distinct.

If, in addition, every nontrivial solution in at least one of the components \(\cC_K(\mu)\) and \(\cC_L(\mu)\) changes sign, then their nontrivial parts are \(G\)-geometrically distinct and
\begin{equation}\label{eq:disjointcont}
\bigl(\cC_K(\mu)\setminus\cT_K\bigr)\cap\bigl(\cC_L(\mu)\setminus\cT_L\bigr)=\varnothing.
\end{equation}
\end{Proposition}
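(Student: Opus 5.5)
The plan has two parts: a fixed-point-space argument that reduces geometric equivalence to constant solutions, and a sign argument that rules out constant solutions in the relevant regimes.

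\emph{Step 1: reduction to constants.} Suppose $(u,\lambda)$ is a nontrivial $K$-invariant solution and $(v,\lambda)$ a nontrivial $L$-invariant solution with $v=gu$. Since $v$ is $L$-invariant, $u=g^{-1}v$ is invariant under $g^{-1}Lg$. Indeed, for $h\in L$ we have $(g^{-1}hg)u=g^{-1}hv=g^{-1}v=u$. Hence
\[
u\in(H^1(M))^K\cap(H^1(M))^{g^{-1}Lg}.
\]
Applying \eqref{eq:fixed-point-separation} to the element $g^{-1}$, this intersection equals $(H^1(M))^G$. Thus $u$ is a nonzero constant $s$, and then $v=gu=s$ as well.

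\emph{Step 2: constant solutions stay away from $(0,\mu)$.} A constant $s\neq0$ solves \eqref{eq:main} exactly when $-\lambda s=|s|^{p-2}s$, that is, when $\lambda=-|s|^{p-2}<0$. Because $\mu>0$, the neighborhood $\{(w,\lambda):\lambda>\mu/2\}$ of $(0,\mu)$ in $H^1(M)\times\bR$ contains no nontrivial constant solution. So inside this neighborhood no nontrivial $K$-invariant solution is $G$-equivalent to a nontrivial $L$-invariant solution. This proves the first assertion.

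\emph{Step 3: the full nontrivial parts.} Assume every nontrivial solution in, say, $\cC_K(\mu)$ changes sign. If some nontrivial element of $\cC_K(\mu)$ were $G$-equivalent to a nontrivial element of $\cC_L(\mu)$, Step 1 would make that element a nonzero constant, which does not change sign. This is a contradiction. If instead the sign-changing component is $\cC_L(\mu)$, the same argument applies to $v=gu$, which is also constant and nonzero. For \eqref{eq:disjointcont}, a point $(u,\lambda)$ lying in both nontrivial parts is $G$-equivalent to itself via $g=e$. By Step 1, $u$ is then a nonzero constant, which again contradicts the sign-change assumption. Note that $\cT_K$ and $\cT_L$ consist of the same points $(0,\lambda)$, so removing them does not interfere with this argument.

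\emph{Main obstacle.} The only delicate point is Step 1: getting the conjugation direction right, so that $u$ is fixed by $g^{-1}Lg$ rather than by $gLg^{-1}$. This is harmless, since \eqref{eq:fixed-point-separation} is assumed for every $g\in G$. One should also note that transitivity is what guarantees $(H^1(M))^G$ consists of constants, which is what makes Step 2 work.
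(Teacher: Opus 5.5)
Your proof is correct and follows the paper's argument: invariance under $K$ and a conjugate of $L$ puts the solution in $(H^1(M))^G$, so it is a nonzero constant with $\lambda=-|s|^{p-2}<0$, which rules out equivalence near $(0,\mu)$ and contradicts sign change along the components. The differences are cosmetic. You take $u$ to be $K$-invariant and apply the hypothesis at $g^{-1}$, whereas the paper takes $u$ to be $L$-invariant and uses $gLg^{-1}$ directly. You also spell out disjointness via $g=e$, which the paper leaves implicit.
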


\begin{proof}
Let \((u,\lambda)\) and \((v,\lambda)\) be nontrivial \(L\)- and \(K\)-invariant solutions, respectively. Suppose that they are \(G\)-geometrically equivalent, so \(v=gu\) for some \(g\in G\). Since \(u\) is \(L\)-invariant, \(v\) is \(gLg^{-1}\)-invariant. By \eqref{eq:fixed-point-separation},
\(
v\in(H^1(M))^K\cap(H^1(M))^{gLg^{-1}}=(H^1(M))^G.
\)
Transitivity of \(G\) implies that \(v\) is constant, and therefore \(u=g^{-1}v\) is constant as well. Writing \(u\equiv c\neq0\) in \eqref{eq:main}, we obtain \(\lambda=-|c|^{p-2}<0\).

Thus no nontrivial \(K\)-invariant solution can be \(G\)-geometrically equivalent to a nontrivial \(L\)-invariant solution at a positive parameter. Since \(\mu>0\), we may choose a neighborhood of \((0,\mu)\) in which every point has positive parameter. This proves that the bifurcations are \(G\)-geometrically distinct.

For the second assertion, any two \(G\)-geometrically equivalent nontrivial solutions in the respective components would be constant, contradicting the sign-changing assumption. Thus their nontrivial parts are \(G\)-geometrically distinct and, in particular, disjoint.
\end{proof}

\begin{Remark}
At \(\mu=0\), the constant solutions \(u\equiv c\neq0\), with \(\lambda=-|c|^{p-2}\), converge to \((0,0)\) as \(c\to0\). Since these solutions are \(G\)-invariant, they belong to both \(\cN_K\) and \(\cN_L\). Thus every neighborhood of \((0,0)\) contains common nontrivial solutions, so the corresponding bifurcations are not \(G\)-geometrically distinct.
\end{Remark}

\subsection{Multiplicity on spheres and projective spaces}\label{subsec:multip}

Let \(m\geq4\) and \(2\leq a\leq\lfloor m/2\rfloor\). Endow \(S^{m-1}\), \(\bC P^{m-1}\), and \(\bH P^{m-1}\). The relevant invariant spectra are as follows.
\begin{enumerate}
\item[(E1)]
On \(S^{m-1}\), the eigenvalues of \(-\Delta\) are
\(k(k+m-2)\), \(k\geq0\), by
\cite[Theorem~5.1]{Gurarie}. By
\cite[Lemmas~6.4 and~6.5]{SmoWass1990}, the
\(SO(a)\times SO(m-a)\)-fixed subspace of the \(k\)-th
spherical eigenspace is nonzero precisely when \(k\) is even.
Hence the positive invariant eigenvalues are
\(\mu_j=2j(2j+m-2)\), \(j\geq1\).

\item[(E2)]
On \(\bC P^{m-1}\), the computations in
\cite[Section~7]{BetJulPet2021} show that every eigenvalue
belongs to the \(S(U(a)\times U(m-a))\)-invariant spectrum.
Thus the positive invariant eigenvalues are
\(\mu_j=4j(j+m-1)\), \(j\geq1\). The same computations show
that the positive \(SO(m)\)-invariant eigenvalues are precisely
the \(\mu_j\) with \(j\) even.

\item[(E3)]
On \(\bH P^{m-1}\), the computations in
\cite[Section~7]{BetJulPet2021} show that every eigenvalue
belongs to the \(Sp(a)\times Sp(m-a)\)-invariant spectrum.
Thus the positive invariant eigenvalues are
\(\mu_j=4j(j+2m-1)\), \(j\geq1\). The positive
\(U(m)\)-invariant eigenvalues are precisely the \(\mu_j\)
with \(j\) even.
\end{enumerate}
In each case, \(\mu_j\) denotes the common ordered sequence of positive eigenvalues for the block actions. For the additional projective actions, the corresponding sequence is \(\mu_{2r}\), \(r\geq1\).

\begin{Theorem}\label{thm:multiplicity-block-actions}
Let \(m\geq4\), and let \(M\) be \(S^{m-1}\), \(\bC P^{m-1}\), or \(\bH P^{m-1}\), endowed with its standard metric. Let \(G=SO(m)\), \(SU(m)\), or \(Sp(m)\), respectively. Consider the Yamabe-type equation at the critical exponent
\begin{equation}\label{eq:critical-yamabe}
-\Delta u-\lambda u=|u|^{\frac{4}{n-2}}u,
\end{equation}
where \(n=\dim M\). For every \(j\geq1\), equation \eqref{eq:critical-yamabe} admits at least \(\lfloor m/2\rfloor-1\) unbounded continua bifurcating from \((0,\mu_j)\). If \(M=\bC P^{m-1}\) or \(M=\bH P^{m-1}\) and \(j\) is even, there are at least \(\lfloor m/2\rfloor\) such continua. 
Every nontrivial solution belonging to these continua changes sign, and their nontrivial parts are pairwise \(G\)-geometrically distinct and disjoint.
\end{Theorem}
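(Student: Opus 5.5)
For each \(a\) with \(2\leq a\leq\lfloor m/2\rfloor\), let \(K_a\) be the block subgroup \(SO(a)\times SO(m-a)\), \(S(U(a)\times U(m-a))\) or \(Sp(a)\times Sp(m-a)\), according to the case. On the projective spaces, let \(L\) be the additional subgroup \(SO(m)\subset SU(m)\), respectively \(U(m)\subset Sp(m)\). The plan is to apply Theorem~\ref{thm:unbounded-continuum} to each \(K_a\), and to \(L\) when \(j\) is even. Proposition~\ref{prop:geometric-separation} will then separate the resulting continua. This gives \(\lfloor m/2\rfloor-1\) continua for \(j\geq1\), and one more on the projective spaces when \(j\) is even.

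\textbf{Step 1: hypotheses of Theorem~\ref{thm:unbounded-continuum}.} Write \(M=G/H\) with \(H=SO(m-1)\), \(S(U(1)\times U(m-1))\) or \(Sp(1)\times Sp(m-1)\). These are proper connected symmetric subgroups of the simple group \(G\), and so are the \(K_a\) and \(L\). For \(a\geq2\), the subgroup \(K_a\) is not conjugate to \(H\), since the block sizes differ. Likewise \(L\) is not conjugate to \(H\), since their dimensions differ.
\begin{itemize}
\item Lemma~\ref{lem:symmetric-positive-dimensional-orbits} gives \(d_{K_a}>0\) and \(d_L>0\). The critical exponent is then strictly below \(2(n-d)/(n-d-2)\) whenever \(n-d>2\).
\item The block actions have cohomogeneity one, because the orbits are the level sets of \(|x_1|^2+\dots+|x_a|^2\). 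The actions of \(SO(m)\) on \(\bC P^{m-1}\) and of \(U(m)\) on \(\bH P^{m-1}\) are also of cohomogeneity one; I would cite Kollross's classification \cite[Theorem~B]{Kollross}.
\item Lemma~\ref{lem:orbit-spaces-interval} then shows that each orbit space is an interval.
\end{itemize}
By (E1)--(E3), \(\mu_j\) lies in the \(K_a\)-invariant spectrum for every \(j\geq1\), and in the \(L\)-invariant spectrum when \(j\) is even. Theorem~\ref{thm:unbounded-continuum} therefore gives unbounded components \(\cC_{K_a}(\mu_j)\) and \(\cC_L(\mu_j)\). Since \(\mu_j>0\), the index in each invariant ordering is at least \(1\), so every nontrivial solution in these components changes sign.

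\textbf{Step 2: geometric separation.} For distinct groups \(K,K'\) in the family, and every \(g\in G\), I must verify condition \eqref{eq:fixed-point-separation}:
\[
(H^1(M))^K\cap(H^1(M))^{gK'g^{-1}}=(H^1(M))^G .
\]
It suffices to show that \(K\) and \(gK'g^{-1}\) generate a subgroup acting transitively on \(M\). An invariant function is then constant on each orbit of the generated closed subgroup, hence constant.

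\textbf{Main obstacle: the transitivity claim.} For two block groups with \(a\neq b\), the Lie algebras of \(K_a\) and \(gK_bg^{-1}\) are distinct maximal subalgebras (Lemma~\ref{lem:maximal-symmetric-subgroup}). Their generated connected subgroup properly contains \(K_a\), so by that lemma it equals \(G\), which is transitive. The same maximality argument handles \(K_a\) against \(gLg^{-1}\), because these are never equal: \(L\) is not conjugate to any \(K_a\), for dimension reasons, e.g.\ \(\dim SO(m)\neq\dim S(U(a)\times U(m-a))\).

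Proposition~\ref{prop:geometric-separation} then shows that the nontrivial parts are pairwise \(G\)-geometrically distinct and disjoint. Since these continua are unbounded and pairwise disjoint away from \((0,\mu_j)\), they are distinct. Counting the indices \(a=2,\dots,\lfloor m/2\rfloor\), and adding \(L\) for even \(j\) on the projective spaces, gives the claimed lower bounds.
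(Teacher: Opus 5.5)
Your overall route is the paper's. You apply Theorem~\ref{thm:unbounded-continuum} to each \(K_a\), and at even \(j\) also to \(SO(m)\), resp.\ \(U(m)\). You then obtain \eqref{eq:fixed-point-separation} from Lemma~\ref{lem:maximal-symmetric-subgroup} and dimension-based nonconjugacy, and conclude with Proposition~\ref{prop:geometric-separation}. Your level-set argument for cohomogeneity one of the block actions is a harmless elementary substitute for the citation of Kollross.

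There is one step that fails as written. You assert uniformly that \(G\) is simple and derive \(d_{K_a}>0\) from Lemma~\ref{lem:symmetric-positive-dimensional-orbits}. For \(M=S^{3}\) (\(m=4\)), \(G=SO(4)\) is not simple, since \(\mathfrak{so}(4)\cong\mathfrak{so}(3)\oplus\mathfrak{so}(3)\). So neither that lemma nor Lemma~\ref{lem:maximal-symmetric-subgroup} is available. The maximality behind them genuinely breaks here: \(K_2=SO(2)\times SO(2)\) is a maximal torus of \(SO(4)\) lying properly in \(U(2)\), so it is not a maximal connected subgroup.

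The conclusion \(d_{K_2}>0\) is still true, but you must prove it another way. The paper does this for all spheres: a point \((x,y)\in S^{m-1}\) fixed by \(SO(a)\times SO(m-a)\) with \(a,m-a\geq2\) must have \(x=y=0\). Hence there are no fixed points, and Lemma~\ref{lem:positive-dimensional-orbits} applies. Alternatively, a \(2\)-torus cannot be conjugated into the rank-one group \(SO(3)\). Your Step~2 needs no change for \(m=4\), since there is then only one admissible block subgroup.

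A smaller point: ``the block sizes differ'' is not by itself a conjugation invariant. Use \(\dim K_a<\dim K_1=\dim H\) for \(2\leq a\leq\lfloor m/2\rfloor\) instead, which is what the paper does.
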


We first prove an auxiliary lemma.

\begin{Lemma}\label{lem:intersection-block-actions}
Let \(m\geq4\), and call an integer \(a\) admissible if \(2\leq a\leq\lfloor m/2\rfloor\). Consider the following actions:
\begin{enumerate}
\item \(M=S^{m-1}\), \(G=SO(m)\), and \(K_a=SO(a)\times SO(m-a)\);

\item \(M=\bC P^{m-1}\), \(G=SU(m)\), and \(K_a=S(U(a)\times U(m-a))\);

\item \(M=\bH P^{m-1}\), \(G=Sp(m)\), and \(K_a=Sp(a)\times Sp(m-a)\).
\end{enumerate}
In each case, the subgroups \(K_a\), with \(a\) admissible, satisfy \eqref{eq:fixed-point-separation} pairwise.

In the complex case, the same remains true after adjoining \(SO(m)\) to this family. In the quaternionic case, it remains true after adjoining \(U(m)\), where \(U(m)\subset Sp(m)\) is the subgroup of quaternionic unitary matrices with complex entries.
\end{Lemma}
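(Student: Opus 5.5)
The plan is to reduce \eqref{eq:fixed-point-separation} to a statement about generated subgroups, and then settle that statement with the maximality property of symmetric subgroups (Lemma~\ref{lem:maximal-symmetric-subgroup}). Fix two distinct members \(K,K'\) of the family under consideration and an element \(g\in G\). Let \(u\in(H^1(M))^K\cap(H^1(M))^{gK'g^{-1}}\). The stabilizer \(\{h\in G:hu=u\}\) is a closed subgroup of \(G\), because the action of \(G\) on \(H^1(M)\) is continuous. It contains \(K\) and \(gK'g^{-1}\), so it also contains the closed subgroup \(L\) that they generate. Since \(K\) and \(gK'g^{-1}\) are connected, \(L\) is connected. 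It therefore suffices to prove \(L=G\). Then \(u\) is \(G\)-invariant, hence constant because \(G\) acts transitively on \(M\). The reverse inclusion in \eqref{eq:fixed-point-separation} is trivial.

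To prove \(L=G\), I first check that each group in the family is a proper connected symmetric subgroup of a compact connected simple group.
\begin{enumerate}
\item \(SO(a)\times SO(m-a)\) is the identity component of the fixed points of conjugation by \(\mathrm{diag}(-I_a,I_{m-a})\) in \(SO(m)\).
\item \(S(U(a)\times U(m-a))\) is the fixed-point group of conjugation by the same matrix in \(SU(m)\).
\item \(Sp(a)\times Sp(m-a)\) is the fixed-point group of conjugation by the same matrix in \(Sp(m)\).
\item \(SO(m)\) is the fixed-point group of complex conjugation in \(SU(m)\).
\item \(U(m)\) is the fixed-point group in \(Sp(m)\) of conjugation by the scalar \(i\), which fixes exactly the matrices with complex entries.
\end{enumerate}
The groups \(SU(m)\) and \(Sp(m)\) are simple. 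The group \(SO(m)\) is simple for \(m\geq5\). For \(m=4\) on the sphere the only admissible value is \(a=2\), so there is no pair to compare and the statement is vacuous there.

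Now \(L\) is connected and contains \(K\), so Lemma~\ref{lem:maximal-symmetric-subgroup} gives \(L=K\) or \(L=G\). If \(L=K\), then \(gK'g^{-1}\subset K\). Since \(gK'g^{-1}\) is again a proper connected symmetric subgroup, applying Lemma~\ref{lem:maximal-symmetric-subgroup} to it gives \(gK'g^{-1}=K\), so \(K\) and \(K'\) are conjugate.

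The remaining step, which I expect to be the only real check, is to exclude conjugacy, and I will do this by comparing dimensions. The relevant dimensions are as follows.
\begin{enumerate}
\item \(\dim\bigl(SO(a)\times SO(m-a)\bigr)=\tfrac{a(a-1)}{2}+\tfrac{(m-a)(m-a-1)}{2}\).
\item \(\dim S(U(a)\times U(m-a))=a^2+(m-a)^2-1\).
\item \(\dim\bigl(Sp(a)\times Sp(m-a)\bigr)=2a^2+2(m-a)^2+m\).
\end{enumerate}
Each is a strictly convex function of \(a\) that is symmetric about \(m/2\). Hence each is strictly decreasing on \([2,\lfloor m/2\rfloor]\), so distinct admissible values of \(a\) give distinct dimensions. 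For the adjoined groups, I compare with the minimum over \(a\). In the complex case \(a^2+(m-a)^2-1\geq m^2/2-1>m(m-1)/2=\dim SO(m)\), since \(m/2-1>0\) for \(m\geq4\). In the quaternionic case \(2a^2+2(m-a)^2+m\geq m^2+m>m^2=\dim U(m)\).

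Thus no two members of the family are conjugate. Hence \(L=G\), and \eqref{eq:fixed-point-separation} holds pairwise.
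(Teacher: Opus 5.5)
Your proposal is correct and follows essentially the paper's own proof: you reduce \eqref{eq:fixed-point-separation} to showing that the closed subgroup generated by \(K\) and \(gK'g^{-1}\) is all of \(G\), use Lemma~\ref{lem:maximal-symmetric-subgroup} to reduce this to nonconjugacy, and exclude conjugacy by comparing dimensions. Your explicit involutions and the stabilizer argument simply spell out what the paper takes from Kollross's classification and from the identity \((H^1(M))^K\cap(H^1(M))^{gLg^{-1}}=(H^1(M))^{\overline{\langle K,gLg^{-1}\rangle}}\), which it states without proof.
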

\begin{proof}
For the sphere action with \(m=4\), there is only one admissible block subgroup, so there is nothing to prove. In all remaining cases, \(G\) is simple and all the subgroups under consideration are proper connected symmetric subgroups; see for example \cite[Theorem~B(i)]{Kollross}.

To establish \eqref{eq:fixed-point-separation}, it suffices to prove that \(\overline{\langle K,gLg^{-1}\rangle}=G\) for every pair of distinct subgroups \(K,L\) in each family and every \(g\in G\), since \((H^1(M))^K\cap(H^1(M))^{gLg^{-1}}=(H^1(M))^{\overline{\langle K,gLg^{-1}\rangle}}\). Since \(G\) is simple, Lemma~\ref{lem:maximal-symmetric-subgroup} applies to the subgroups under consideration. We first use it to show that \(\overline{\langle K,gLg^{-1}\rangle}=G\) whenever \(K\) and \(L\) are nonconjugate, and then verify pairwise nonconjugacy within each family.

Suppose that \(K\) and \(L\) are nonconjugate, and fix \(g\in G\). Since \(K\) and \(L\) are connected, \(\overline{\langle K,gLg^{-1}\rangle}\) is a connected closed subgroup containing \(K\). Since \(G\) is simple and \(K\) is a proper connected symmetric subgroup, Lemma~\ref{lem:maximal-symmetric-subgroup} shows that \(\overline{\langle K,gLg^{-1}\rangle}\) equals either \(K\) or \(G\). In the first case, \(gLg^{-1}\subset K\). Applying the same lemma to the symmetric subgroup \(gLg^{-1}\) and the proper connected closed subgroup \(K\), we obtain \(K=gLg^{-1}\), contrary to nonconjugacy. Therefore \(\overline{\langle K,gLg^{-1}\rangle}=G\).

It remains to verify pairwise nonconjugacy within each family. Since \(\dim K_a\) strictly decreases as \(a\) increases in the admissible range, distinct block subgroups are not conjugate. Moreover,
\[
\dim K_a-\dim SO(m)=\frac{(m-2a)^2+m-2}{2}>0
\]
in the complex case, and
\[
\dim K_a-\dim U(m)=(m-2a)^2+m>0
\]
in the quaternionic case. Thus the additional subgroup is not conjugate to any of the corresponding block subgroups. This proves pairwise nonconjugacy and hence \eqref{eq:fixed-point-separation}.
\end{proof}

We can now prove Theorem~\ref{thm:multiplicity-block-actions}.
\begin{proof}[Proof of Theorem~\ref{thm:multiplicity-block-actions}]
We apply Theorem~\ref{thm:unbounded-continuum} to each subgroup in Lemma~\ref{lem:intersection-block-actions}, and then use Proposition~\ref{prop:geometric-separation} to distinguish the resulting components.

We first verify the geometric assumptions of Theorem~\ref{thm:unbounded-continuum}. Except for the sphere case \(m=4\), the actions in Lemma~\ref{lem:intersection-block-actions} have cohomogeneity one by \cite[Theorem~B(i)]{Kollross}. For \(m=4\), the principal orbits of \(SO(2)\times SO(2)\) on \(S^3\) are products of two circles, so this action also has cohomogeneity one. In every case, \(G\) is compact, connected, and semisimple, and both the acting subgroup and the isotropy subgroup defining \(M=G/H\) are connected. Hence Lemma~\ref{lem:orbit-spaces-interval} shows that all these actions have interval orbit space.

To apply Theorem~\ref{thm:unbounded-continuum} at the critical exponent, we next show that \(d_K>0\) for every subgroup \(K\) listed in Lemma~\ref{lem:intersection-block-actions}. For the sphere actions, suppose that \((x,y)\in S^{m-1}\subset\bR^a\oplus\bR^{m-a}\) is fixed by \(K_a=SO(a)\times SO(m-a)\). Then \(Ax=x\) for every \(A\in SO(a)\) and \(By=y\) for every \(B\in SO(m-a)\). Since \(a,m-a\geq2\), the only vector fixed by every rotation in either factor is the zero vector. Hence \(x=y=0\), contradicting \(|x|^2+|y|^2=1\). Thus the action has no fixed points and, since \(K_a\) is connected, Lemma~\ref{lem:positive-dimensional-orbits} gives \(d_{K_a}>0\).

For the projective actions, write \(M=G/H\), where \(G=SU(m)\) and \(H=S(U(1)\times U(m-1))\) in the complex case, while \(G=Sp(m)\) and \(H=Sp(1)\times Sp(m-1)\) in the quaternionic case. In both cases, \(H\) is a proper connected symmetric subgroup of \(G\); see \cite[Theorem~B(i)]{Kollross}. As established in the proof of Lemma~\ref{lem:intersection-block-actions}, every acting subgroup \(K\) is also proper, connected, and symmetric. Since \(G\) is simple, Lemma~\ref{lem:symmetric-positive-dimensional-orbits} reduces the condition \(d_K>0\) to nonconjugacy of \(K\) and \(H\). This follows from \(\dim K<\dim H\).

Thus, in every case, \(d_K>0\), so the critical exponent \(p=2n/(n-2)\), where \(n=\dim M\), satisfies the exponent condition in Theorem~\ref{thm:unbounded-continuum}. By (E1)--(E3), the positive \(K_a\)-invariant eigenvalues are precisely \(\mu_j\), \(j\geq1\), for every admissible \(a\). Applying the theorem at each of these eigenvalues gives an unbounded component \(\cC_{K_a}(\mu_j)\) bifurcating from \((0,\mu_j)\), all of whose nontrivial solutions change sign. In the projective cases, (E2) and (E3) also allow us to apply the theorem to the additional subgroups at every \(\mu_{2r}\), \(r\geq1\). We obtain components \(\cC_{SO(m)}(\mu_{2r})\) in the complex case and \(\cC_{U(m)}(\mu_{2r})\) in the quaternionic case, with the same properties.

It remains to distinguish and count the components obtained at each \((0,\mu_j)\). The corresponding subgroups satisfy \eqref{eq:fixed-point-separation} pairwise by Lemma~\ref{lem:intersection-block-actions}. Since every nontrivial solution in these components changes sign, Proposition~\ref{prop:geometric-separation} shows that their nontrivial parts are pairwise \(G\)-geometrically distinct and disjoint. There are \(\lfloor m/2\rfloor-1\) admissible block subgroups. In each projective case, the additional subgroup increases this number to \(\lfloor m/2\rfloor\) when \(j\) is even.
\end{proof}

\begin{Remark}
The conclusion of Lemma~\ref{lem:intersection-block-actions} remains valid if \(K_1\) is included among the block subgroups. In each of the three cases, however, \(K_1\) has fixed points, so \(d_{K_1}=0\), and the embedding of \((H^1(M))^{K_1}\) into \(L^{2n/(n-2)}(M)\) is not compact. Thus the compactness argument used in Theorem~\ref{thm:multiplicity-block-actions} does not apply to \(K_1\). For equation \eqref{eq:main} with \(2<p<2n/(n-2)\), the ordinary compact Sobolev embedding allows \(K_1\) to be included. The same argument therefore gives multiplicity bounds larger by one than those established for the critical exponent in Theorem~\ref{thm:multiplicity-block-actions}.
\end{Remark}

\begin{Remark}
The same argument also applies to equation \eqref{eq:main} with
supercritical exponents. A subgroup \(K\) considered above gives
an unbounded continuum at each positive \(K\)-invariant eigenvalue
whenever \(n-d_K\leq2\), or whenever
\(2<p<2(n-d_K)/(n-d_K-2)\) if \(n-d_K>2\).
Counting the subgroups satisfying this condition at a common
eigenvalue gives a lower bound on the number of geometrically
distinct continua bifurcating from that eigenvalue.
For the critical equation, it suffices to know that \(d_K>0\);
for supercritical exponents, the admissible range depends on
the value of \(d_K\). These values are computed in
Lemma~\ref{lem:minimal-orbit-dimensions} in the Appendix.
\end{Remark}

\begin{Remark}
Since the equation is \(G\)-invariant, one might also try to prove unboundedness using equivariant degree. Such an argument was used in \cite{Stefaniak2026}, where subcritical growth provides compactness on the full Sobolev space and allows the use of a torus action and its weights. Even in the subcritical case, however, unboundedness of a component in the full solution space does not imply unboundedness of the components obtained in prescribed fixed-point spaces. Thus an ambient unboundedness result alone would not give the multiplicity of geometrically distinct continua established here.

For critical and supercritical exponents, there is a further difficulty. Compactness is recovered by restricting to \((H^1(M))^K\), which is generally not preserved by \(G\). This space carries an action of \(N_G(K)/K\), where \(N_G(K)\) is the normalizer of \(K\) in \(G\), but this quotient may be finite; see \cite[Proposition~3.10 and Corollary~3.11]{Marzantowicz2019} for orthogonal block subgroups. The argument based on torus weights therefore does not transfer directly to these spaces. 
\end{Remark}

\section{Appendix}\label{appendix}

We give the proof of Lemma~\ref{lem:maximal-symmetric-subgroup} and compute the smallest orbit dimensions for several classical cohomogeneity one actions.

\subsection{Maximality of symmetric subgroups}\label{subsec:maxim}

\begin{proof}[Proof of Lemma~\ref{lem:maximal-symmetric-subgroup}]
Let \(\fg\), \(\fh\), and \(\fl\) denote the Lie algebras of \(G\), \(H\), and \(L\), respectively, and suppose that \(\fh\neq\fl\). We will show that \(\fl=\fg\). Since a connected Lie subgroup is determined by its Lie algebra, this will prove the lemma.

Consider the Cartan decomposition \(\fg=\fh\oplus\fp\) associated with the symmetric pair \((\fg,\fh)\). We have \([\fh,\fp]\subset\fp\); see \cite[Section~5.7]{Gurarie}. Thus \(\fp\), equipped with the action \(\rho(X)Y=[X,Y]\) for \(X\in\fh\) and \(Y\in\fp\), is a representation of \(\fh\). Since \(G\) is compact and \(\fg\) is simple, this representation is irreducible by case~I of \cite[Theorem~VIII.5.3]{Helgason}.

Consider \(\fl\cap\fp\). It is nonzero: choose \(Z\in\fl\setminus\fh\) and write \(Z=X+Y\), where \(X\in\fh\) and \(Y\in\fp\setminus\{0\}\). Since \(\fh\subset\fl\), we have \(Y=Z-X\in\fl\cap\fp\). This subspace is also invariant under \(\rho\): if \(X\in\fh\) and \(Y\in\fl\cap\fp\), then \([X,Y]\in\fl\), since \(\fh\subset\fl\), and \([X,Y]\in\fp\), since \([\fh,\fp]\subset\fp\). Irreducibility therefore gives \(\fl\cap\fp=\fp\). Hence \(\fl\) contains both \(\fh\) and \(\fp\), so \(\fl=\fg\).
\end{proof}

\subsection{Smallest orbit dimensions}\label{subsec:smallest}

We compute the quantity \(d_K\) defined in \eqref{eq:dK} for several classical cohomogeneity one actions. These values determine the compactness ranges in Theorem~\ref{thm:HV} and hence the supercritical exponents for which Theorem~\ref{thm:unbounded-continuum} applies.

\begin{Lemma}\label{lem:minimal-orbit-dimensions}
The following cohomogeneity one actions have the indicated smallest orbit dimensions.
\begin{enumerate}
\item For the action of \(K=SO(a)\times SO(m-a)\) on \(S^{m-1}\), where \(2\leq a\leq m-2\), one has \(d_K=\min\{a-1,m-a-1\}\).

\item\label{item:complex-block-orbits} For the action of \(K_a=S(U(a)\times U(m-a))\subset SU(m)\) on \(\bC P^{m-1}\), where \(m\geq2\) and \(1\leq a\leq\lfloor m/2\rfloor\), one has \(d_{K_a}=2(a-1)\).

\item\label{item:quaternionic-block-orbits} For the action of \(K_a=Sp(a)\times Sp(m-a)\subset Sp(m)\) on \(\bH P^{m-1}\), where \(m\geq2\) and \(1\leq a\leq\lfloor m/2\rfloor\), one has \(d_{K_a}=4(a-1)\).

\item For the action of \(SO(m)\subset SU(m)\) on \(\bC P^{m-1}\), where \(m\geq4\), one has \(d_{SO(m)}=m-1\).

\item For the action of \(U(m)\subset Sp(m)\) on \(\bH P^{m-1}\), where \(m\geq4\), one has \(d_{U(m)}=2m-2\).

\item For the action of \(K=SO(m-1)\) on the oriented real Grassmann manifold \(\mathrm{Gr}_k^+(\bR^m)\), induced by its action on \(\bR^m\) fixing the last coordinate, where \(2\leq k\leq m-2\), one has \(d_K=\min\{k(m-k-1),(k-1)(m-k)\}\).

\item For the diagonal action of \(K=\{(A,A):A\in SO(m)\}\) on \(S^{m-1}\times S^{m-1}\), where \(m\geq3\), one has \(d_K=m-1\).
\end{enumerate}
All orbits are infinite except when \(a=1\) in cases~\ref{item:complex-block-orbits} and~\ref{item:quaternionic-block-orbits}.
\end{Lemma}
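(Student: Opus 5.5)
We compute \(d_K\) case by case by identifying the orbits of minimal dimension. For a cohomogeneity one action these are among the two nonprincipal orbits \(O_\pm\), since a principal orbit has dimension \(n-1\) and every singular orbit has dimension at most \(n-1\). So in each case it suffices to locate the two endpoint orbits, corresponding to the endpoints of a normal geodesic, and compute their dimensions as \(\dim K-\dim K_x\), or directly as homogeneous submanifolds. In each case we first parametrize a normal geodesic, then identify the isotropy at its endpoints.

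\emph{(1)} On \(S^{m-1}\subset\bR^a\oplus\bR^{m-a}\), the orbit of \((x,y)\) is \(S^{a-1}(|x|)\times S^{m-a-1}(|y|)\). The singular orbits are \(S^{a-1}\times\{0\}\) and \(\{0\}\times S^{m-a-1}\), of dimensions \(a-1\) and \(m-a-1\).

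\emph{(2)} On \(\bC P^{m-1}\), the orbit of \([x:y]\) with \(x,y\neq0\) is a quotient of \(S^{2a-1}\times S^{2(m-a)-1}\) by the diagonal \(U(1)\). The singular orbits are \(\bC P^{a-1}\) and \(\bC P^{m-a-1}\), of real dimensions \(2(a-1)\) and \(2(m-a-1)\). The minimum is \(2(a-1)\) since \(a\leq m/2\).

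\emph{(3)} The quaternionic case is identical and gives \(\bH P^{a-1}\) and \(\bH P^{m-a-1}\), hence \(4(a-1)\).

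\emph{(4)} For \(SO(m)\) on \(\bC P^{m-1}\), write \(z=x+iy\) with \(x,y\in\bR^m\). Up to the phase, the invariants are \(|x|,|y|\) and \(\langle x,y\rangle\), so the quadric type determines the orbit. The singular orbits are:
\begin{itemize}
\item[(a)] the real points \(\bR P^{m-1}\), of dimension \(m-1\);
\item[(b)] the quadric \(\{\sum z_i^2=0\}\), which is \(SO(m)/(SO(2)\times SO(m-2))\) of dimension \(2(m-2)\).
\end{itemize}
For \(m\geq4\) the minimum is \(m-1\).

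\emph{(5)} For \(U(m)\) on \(\bH P^{m-1}\), write \(q=z+wj\). The singular orbits are:
\begin{itemize}
\item[(a)] \(\bC P^{m-1}\), of dimension \(2m-2\);
\item[(b)] the orbit of \(U(m)/(U(2)\times U(m-2))\)-type (isotropic \(2\)-planes), whose dimension is \(m^2-4-(m-2)^2\)-ish, equal to \(4m-5\) after accounting for a circle factor in the isotropy.
\end{itemize}
The key fact is that (b) is larger than \(2m-2\) for \(m\geq4\). I would verify this by computing the isotropy Lie algebra at an explicit endpoint point such as \([1:j:0\cdots]\).

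\emph{(6)} For \(SO(m-1)\) on \(\mathrm{Gr}_k^+(\bR^m)\), the invariant is the angle between the plane \(V\) and \(e_m\). The singular orbits are:
\begin{itemize}
\item[(a)] \(e_m\in V\): then \(V=\bR e_m\oplus W\) with \(W\in\mathrm{Gr}^+_{k-1}(\bR^{m-1})\), of dimension \((k-1)(m-k)\);
\item[(b)] \(V\perp e_m\): then \(V\in\mathrm{Gr}^+_k(\bR^{m-1})\), of dimension \(k(m-1-k)\).
\end{itemize}

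\emph{(7)} For the diagonal action on \(S^{m-1}\times S^{m-1}\), the invariant is \(\langle x,y\rangle\in[-1,1]\). The endpoint orbits \(y=\pm x\) are copies of \(S^{m-1}\), while the interior orbits are the Stiefel manifolds \(V_2(\bR^m)\). So \(d_K=m-1\).

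The final claim about infinite orbits then follows: \(d_K>0\) in every case except \(a=1\) in (2) and (3), where \(\bC P^0\) and \(\bH P^0\) are points. The main obstacle I expect is (5), namely pinning down the second singular orbit of \(U(m)\) on \(\bH P^{m-1}\) and its isotropy correctly. Here I would fall back on Kollross's tables for the isotropy groups, or compute the stabilizer Lie algebra at the explicit point directly.
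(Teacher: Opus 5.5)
Your proposal follows the paper's proof: in each case you locate the two nonprincipal orbits and compare their dimensions, and all the final values are correct. The only differences are that you compute (4) directly via the invariant $\sum z_i^2$, where the paper cites Table~F of Grove--Wilking--Ziller for (4) and (5), and that in (5)(b) you have an arithmetic slip. The orbit through your point $[1:j:0:\cdots:0]$ is $U(m)/(SU(2)\times U(m-2))$, a circle bundle over $\mathrm{Gr}_2(\bC^m)$, of dimension $m^2-3-(m-2)^2=4m-7$ rather than $4m-5$. This is harmless, since you only need that dimension to exceed $2m-2$.
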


\begin{proof}
In each case, the smallest orbit is one of the two nonprincipal orbits.

\begin{enumerate}
\item Let \(x\in S^{a-1}\subset\bR^a\) and \(y\in S^{m-a-1}\subset\bR^{m-a}\). The two nonprincipal orbits are \(K(x,0)=S^{a-1}\times\{0\}\) and \(K(0,y)=\{0\}\times S^{m-a-1}\). Their dimensions are \(a-1\) and \(m-a-1\), respectively, so \(d_K=\min\{a-1,m-a-1\}\).

\item Write a point of \(\bC P^{m-1}\) as \([x,y]\), where \((x,y)\in\bC^a\oplus\bC^{m-a}\). The two nonprincipal orbits are \(\{[x,0]:x\in\bC^a\setminus\{0\}\}\) and \(\{[0,y]:y\in\bC^{m-a}\setminus\{0\}\}\). They are diffeomorphic to \(\bC P^{a-1}\) and \(\bC P^{m-a-1}\), respectively, and hence have dimensions \(2(a-1)\) and \(2(m-a-1)\). Since \(a\leq m-a\), it follows that \(d_{K_a}=2(a-1)\).

\item The quaternionic case follows in the same way, with \(\bC\) replaced by \(\bH\). The two nonprincipal orbits are diffeomorphic to \(\bH P^{a-1}\) and \(\bH P^{m-a-1}\), and hence have dimensions \(4(a-1)\) and \(4(m-a-1)\). Since \(a\leq m-a\), it follows that \(d_{K_a}=4(a-1)\).

\item By \cite[Table~F]{GroWilZil}, the two nonprincipal orbits of the \(SO(m)\)-action on \(\bC P^{m-1}\) are \(SO(m)/(SO(2)\times SO(m-2))\) and \(SO(m)/O(m-1)\). Their dimensions are \(2m-4\) and \(m-1\), respectively. For \(m\geq4\), this gives \(d_{SO(m)}=m-1\).

\item By \cite[Table~F]{GroWilZil}, the two nonprincipal orbits of the \(U(m)\)-action on \(\bH P^{m-1}\) are \(U(m)/(SU(2)\times U(m-2))\) and \(U(m)/(U(1)\times U(m-1))\). Their dimensions are \(4m-7\) and \(2m-2\), respectively. For \(m\geq4\), this gives \(d_{U(m)}=2m-2\).

\item The group \(K=SO(m-1)\) fixes \(e_m\) and preserves \(e_m^\perp\). One nonprincipal orbit consists of the oriented \(k\)-planes contained in \(e_m^\perp\). It is diffeomorphic to \(\mathrm{Gr}_k^+(\bR^{m-1})\) and has dimension \(k(m-k-1)\).

The other nonprincipal orbit consists of the oriented \(k\)-planes containing \(e_m\). Intersecting such a plane with \(e_m^\perp\) identifies this orbit with \(\mathrm{Gr}_{k-1}^+(\bR^{m-1})\), where the orientation of the intersection is determined using \(e_m\). Its dimension is \((k-1)(m-k)\). Therefore \(d_K=\min\{k(m-k-1),(k-1)(m-k)\}\).

\item Two pairs \((x,y)\) and \((x',y')\) belong to the same diagonal orbit if and only if \(\langle x,y\rangle=\langle x',y'\rangle\). Thus the map \(f\colon(S^{m-1}\times S^{m-1})/K\to[-1,1]\), defined by \(f(K(x,y))=\langle x,y\rangle\), identifies the orbit space with \([-1,1]\). The nonprincipal orbits corresponding to the endpoints are \(\{(x,x):x\in S^{m-1}\}\) and \(\{(x,-x):x\in S^{m-1}\}\). Both are diffeomorphic to \(S^{m-1}\), so \(d_K=m-1\).
\end{enumerate}

Since the acting groups are compact, all their orbits are infinite if and only if \(d_K>0\). The remaining assertions follow from the formulas above.
\end{proof}

\begin{Remark}
For the diagonal action on \(S^{m-1}\times S^{m-1}\), the restriction \(m\geq3\) ensures that the transitive group \(SO(m)\times SO(m)\) is semisimple, so Lemma~\ref{lem:orbit-spaces-interval} gives interval orbit space. For \(m=2\), the formula \(d_K=m-1\) still holds. However, \(SO(2)\times SO(2)\) is abelian and hence not semisimple, so the lemma does not apply; in this case, the orbit space is a circle.
\end{Remark}

\textbf{AI disclosure.} During the research and preparation of this article, the author used ChatGPT to assist in navigating the literature, exploring possible proof strategies, and improving the exposition. The mathematical arguments and references in the final manuscript were verified by the author, who takes full responsibility for the content.


\begin{thebibliography}{99}
\bibitem{AlexandrinoBettiol2015} M.~M. Alexandrino and R.~G. Bettiol, \emph{Lie Groups and Geometric Aspects of Isometric Actions}, Springer, Cham, 2015.

\bibitem{Aubin1982} T.~Aubin, \emph{Nonlinear Analysis on Manifolds. Monge--Amp\`ere Equations}, Grundlehren der mathematischen Wissenschaften 252, Springer-Verlag, New York, 1982.

\bibitem{BetJulPet2021} A.~Betancourt de la Parra, J.~Julio-Batalla and J.~Petean, \emph{Global bifurcation techniques for Yamabe type equations on Riemannian manifolds}, Nonlinear Anal. 202 (2021), 112140.

\bibitem{BettiolPiccione2013} R.~G. Bettiol and P.~Piccione, \emph{Bifurcation and local rigidity of homogeneous solutions to the Yamabe problem on spheres}, Calc. Var. Partial Differential Equations 47 (2013), 789--807.

\bibitem{Bredon} G.~E. Bredon, \emph{Introduction to Compact Transformation Groups}, Pure and Applied Mathematics 46, Academic Press, New York--London, 1972.

\bibitem{ClappFernandez2017} M.~Clapp and J.~C. Fern\'andez, \emph{Multiplicity of nodal solutions to the Yamabe problem}, Calc. Var. Partial Differential Equations 56(5) (2017), Paper No.~145, 22 pp.

\bibitem{ClappSaldanaSzulkin2021} M.~Clapp, A.~Salda\~na and A.~Szulkin, \emph{Phase separation, optimal partitions, and nodal solutions to the Yamabe equation on the sphere}, Int. Math. Res. Not. IMRN 2021 (2021), no.~5, 3633--3652.

\bibitem{FerPalPet2020} J.~C. Fern\'andez, O.~Palmas and J.~Petean, \emph{Supercritical elliptic problems on the round sphere and nodal solutions to the Yamabe problem in projective spaces}, Discrete Contin. Dyn. Syst. 40(4) (2020), 2495--2514.

\bibitem{GroWilZil} K.~Grove, B.~Wilking and W.~Ziller, \emph{Positively curved cohomogeneity one manifolds and \(3\)-Sasakian geometry}, J. Differential Geom. 78(1) (2008), 33--111.

\bibitem{Gurarie} D.~Gurarie, \emph{Symmetries and Laplacians: Introduction to Harmonic Analysis, Group Representations and Applications}, North-Holland Mathematics Studies 174, North-Holland, Amsterdam, 1992.

\bibitem{HebVau} E.~Hebey and M.~Vaugon, \emph{Sobolev spaces in the presence of symmetries}, J. Math. Pures Appl. 76(10) (1997), 859--881.

\bibitem{HealeyKielhofer1993} T.~J. Healey and H.~Kielh\"ofer, \emph{Separation of global solution branches of elliptic systems with symmetry via nodal properties}, Nonlinear Anal. 21(9) (1993), 665--684.

\bibitem{Helgason} S.~Helgason, \emph{Differential Geometry, Lie Groups, and Symmetric Spaces}, Pure and Applied Mathematics 80, Academic Press, New York, 1978.

\bibitem{Henry2019} G.~Henry, \emph{Isoparametric functions and nodal solutions of the Yamabe equation}, Ann. Global Anal. Geom. 56(2) (2019), 203--219.

\bibitem{KnappLie} A.~W. Knapp, \emph{Lie Groups Beyond an Introduction}, digital 2nd ed., Progress in Mathematics 140, published by the author, East Setauket, NY, 2023.

\bibitem{Kollross} A.~Kollross, \emph{A classification of hyperpolar and cohomogeneity one actions}, Trans. Amer. Math. Soc. 354(2) (2002), 571--612.

\bibitem{LeeParker1987} J.~M. Lee and T.~H. Parker, \emph{The Yamabe problem}, Bull. Amer. Math. Soc. (N.S.) 17(1) (1987), 37--91.

\bibitem{LimaPiccioneZedda2012} L.~L. de Lima, P.~Piccione and M.~Zedda, \emph{On bifurcation of solutions of the Yamabe problem in product manifolds}, Ann. Inst. H. Poincar\'e C Anal. Non Lin\'eaire 29 (2012), no.~2, 261--277.

\bibitem{Marzantowicz2019} W.~Marzantowicz, \emph{Geometrically distinct solutions given by symmetries of variational problems with the \(O(N)\)-symmetry}, Commun. Contemp. Math. 21(8) (2019), 1850080.

\bibitem{PetBar2019} J.~Petean and H.~Barrantes González, \emph{A note on solutions of Yamabe-type equations on products of spheres}, Proc. Amer. Math. Soc. 147(7) (2019), 3143--3153.

\bibitem{Rabinowitz1971} P.~H. Rabinowitz, \emph{Some global results for nonlinear eigenvalue problems}, J. Funct. Anal. 7(3) (1971), 487--513.

\bibitem{SmoWass1990} J.~Smoller and A.~G. Wasserman, \emph{Bifurcation and symmetry-breaking}, Invent. Math. 100(1) (1990), 63--95.

\bibitem{Stefaniak2026} P.~Stefaniak, \emph{Unbounded sets of solutions of non-cooperative elliptic systems on symmetric spaces}, J. Differential Equations 482 (2026), 114741.

\end{thebibliography}
\end{document}